\makeatletter \@addtoreset{equation}{section} \makeatother
\renewcommand\thetable{\thesection.\@arabic\c@table}
\theoremstyle{plain}
\newtheorem{maintheorem}{Theorem}
\newtheorem{theorem}{Theorem}[section]
\newtheorem{proposition}{Proposition}[section]
\newtheorem{lemma}{Lemma}[section]
\newtheorem{corollary}{Corollary}[section]
\newtheorem{claim}{Claim}[section]
\newtheorem{definition}{Definition}[section]
\newtheorem{remark}{Remark}[section]
\begin{document}

\title{Dimension approximation in smooth dynamical systems}

\author{Yongluo Cao }
\address{Departament of Mathematics, Shanghai Key Laboratory of PMMP, East China Normal University,
 Shanghai 200062, P.R. China}
\address{Departament of Mathematics, Soochow University,
Suzhou 215006, Jiangsu, P.R. China}
\address{Center for  Dynamical Systems and Differential Equations, Soochow University, Suzhou 215006, Jiangsu, P.R. China}
\email{ylcao@suda.edu.cn}

\author{Juan Wang}
\address{School of Mathematics, Physics and Statistics, Shanghai University of Engineering Science, Shanghai 201620, P.R. China}
\email{wangjuanmath@sues.edu.cn}

\author{Yun Zhao}
\address{School of Mathematical Sciences, Soochow University,
  Suzhou 215006, Jiangsu, P.R. China}
  \address{Center for  Dynamical Systems and Differential Equations, Soochow University, Suzhou 215006, Jiangsu, P.R. China}
\email{zhaoyun@suda.edu.cn}


\begin{abstract}For a non-conformal repeller $\Lambda$  of a $C^{1+\alpha}$ map $f$ preserving an ergodic measure $\mu$ of positive entropy, this paper shows that the Lyapunov dimension of $\mu$ can be approximated gradually by the Carath\'{e}odory singular dimension of a sequence of horseshoes. For a $C^{1+\alpha}$ diffeomorphism $f$ preserving a hyperbolic ergodic  measure $\mu$ of positive entropy, if $(f, \mu)$ has only two Lyapunov exponents $\lambda_u(\mu)>0>\lambda_s(\mu)$, then the Hausdorff or lower box or upper box dimension of $\mu$ can be approximated by the corresponding dimension of the horseshoes $\{\Lambda_n\}$. The same statement holds true if $f$ is a $C^1$ diffeomorphism with a dominated Oseledet's splitting with respect to $\mu$.
 \end{abstract}

\keywords{Dimension, hyperbolic measure, horseshoe, repeller.}

 \footnotetext{2010 {\it Mathematics Subject classification}: 37C45, 37D25, 37D20
  }

\maketitle

\section{Introduction }
In smooth dynamical systems, a fundamental approximation result asserts  that a $C^{1+\alpha}$ diffeomorphism $f$ which preserves a hyperbolic ergodic measure $\mu$ of positive  entropy can be approximated gradually by compact invariant locally maximal hyperbolic sets--horseshoes $\{\Lambda_n\}$, in the sense that dynamical quantities on the horseshoes such as the topological entropy and pressure,  Lyapunov
exponents and averages of continuous functions are approaching to the ones of the measure $\mu$.

This type of results are widely referred to the landmark work by Katok \cite{ka80} or Katok and Mendoza (see \cite{km95}). Misiurewicz and Szlenk \cite{ms80} earlier proved a related result for continuous and for piecewise monotone maps of the interval. Przytycki and Urba\'{n}ski \cite{PU10} obtained corresponding properties for holomorphic maps in the case of a measure with only positive Lyapunov exponent. A related setting of dyadic diophantine approximations is established by  Persson and Schmeling in \cite{PS08}. For a general $C^{1+\alpha}$ diffeomorphism $f$ preserving a hyperbolic ergodic measure $\mu$ with positive entropy, assume that $\mu$ has $\ell$ different Lyapunov exponent $\{\lambda_j\}_{j=1}^{\ell}$, on each approaching  horseshoe $\Lambda_n$,
Avila, Crovisier and Wilkinson \cite{acw} obtained a continuous splitting
\[T_{\Lambda_n}M=E_1 \oplus E_2 \oplus \cdots \oplus E_\ell \]
and showed that  the exponential growth of  $D_xf^n|_{E_i}$ is roughly $\lambda_i$  for each $i=1, 2, \cdots, \ell$. A corresponding statement for $C^{1+\alpha}$ non-conformal transformations (i.e., non-invertible maps) was shown in \cite{cpz}. See Chung \cite{Chu99},  Gelfert \cite{Gel10, Gel16} and Yang \cite{Yan15}
for other results related to
Katok's approximation construction  of $C^{1+\alpha}$ maps.

A natural question is how large that part of the dynamics described by these horseshoes is. So, it is interesting to estimate the Hausdorff dimension of the stable and/or unstable Cantor sets of a horseshoe.
If $\mu$ is a SRB measure (i.e. a measure with a particular absolute continuity
property on unstable manifolds; see \cite{bp13} for precise definitions), it was showed in \cite{San02} that $\mu$  can be approximated by ergodic measures supported on horseshoes with arbitrarily large unstable dimensions, which generalized Mendoza's result in \cite{men1988} for diffeomorphisms in higher dimensional manifold.
 The approach in \cite{San02} was based on Markov towers that can be described by horseshoes with infinitely many branches and variable return times.
However, there is an essential mistake in the proof of the key Proposition 5.1 in \cite{San02}.
The authors in \cite{wqc} proved  the same result by a different method.
They used the u-Gibbs property of the conditional measure of the equilibrium measure and the properties
of the uniformly hyperbolic dynamical systems.
Furthermore,  in \cite{wqc} the authors  proved that the Hausdorff dimension of $\mu$ can be approximated gradually by the Hausdorff dimension of the horseshoes $\{\Lambda_n\}$ provided that  the stable direction is one dimension.
See also \cite{LS13, Men85, Men89, MS19, San03,San17} that represent works close to
this topic. 

In this work, our main task is to compare the dimension of the horseshoes $\{\Lambda_n\}$ and the given hyperbolic ergodic measure $\mu$ of a $C^{r}$ ($r\ge 1$) diffeomorphism in a more general setting that $\mu$ may be not a SRB measure.
 For a non-conformal repeller $\Lambda$  of a $C^{1+\alpha}$ map, utilizing the approximation result in \cite{cpz}, we show that the Lyapunov dimension (see \eqref{Ldim} for the definition) of an $f$-invariant ergodic measure $\mu$ supported on $\Lambda$ can be approximated gradually by  the Carath\'{e}odory singular dimension (see \eqref{Cdim} for the definition) of the horseshoes $\{\Lambda_n\}$. For a $C^{1+\alpha}$ diffeomorphism $f$ preserving a hyperbolic ergodic  measure $\mu$ of positive entropy, if $(f, \mu)$ has only two Lyapunov exponents $\lambda_u(\mu)>0>\lambda_s(\mu)$, then the Hausdorff or lower box or upper box dimension of $\mu$ can be approximated by the corresponding dimension of the horseshoes $\{\Lambda_n\}$. The same statement holds true if $f$ is a $C^1$ diffeomorphism with a dominated Oseledec's splitting w.r.t. $\mu$.

We arrange the paper as follows. In Section \ref{dp}, we give some basic notions
and properties about topological and measure theoretic pressures and dimensions of sets and measures.
Statements of our main results will be given in Section \ref{Results}.
In Section \ref{proof}, we will give the detailed proofs of the main results.

\section{Definitions and preliminaries} \label{dp}
In this section, we recall the definitions of topological pressure and various dimensions of subsets and/or of invariant measures.


\subsection{Topological and measure theoretic pressures}
Let $f: X\to X$ be a continuous transformation on a compact metric space $X$ equipped with metric $d$.  A subset $F\subset X$ is called an $(n, \epsilon)-$separated set with respect to $f$, if for any two different points
$x,y\in F$, we have $d_n(x,y):=\max_{0\leq k\leq n-1}d(f^k(x), f^k(y))>\epsilon.$ A sequence of continuous functions $\Phi=\{\phi_n\}_{n\ge 1}$ is called \emph{sub-additive}, if
$$\phi_{m+n}\leq\phi_n+\phi_m\circ f^n,~~\forall n,m\in \mathbb{N}.$$
Furthermore,  a sequence of continuous functions $\Psi=\{\psi_n\}_{n\ge 1}$  is called \emph{super-additive} if $-\Psi=\{-\psi_n\}_{n\ge 1}$ is sub-additive.

        \subsubsection{Topological pressure defined via separated sets}
        Given a  sub-additive potential $\Phi=\{\phi_n\}_{n\ge 1}$ on $X$, put $$P_n(f, \Phi, \epsilon)=\sup\Big\{\sum_{x\in F}e^{\phi_n(x)}| F\ \mbox{is an}\ (n, \epsilon)-\mbox{separated subset of } X\Big\}.$$

  \begin{definition}
  We call the following quantity
  \begin{eqnarray}\label{top-pressure}
 P_{\mathrm{top}}(f,\Phi)=\lim_{\epsilon\to 0}\limsup_{n\to\infty}\frac{1}{n}\log P_n(f,\Phi,\epsilon)
 \end{eqnarray}
 the \emph{sub-additive  topological pressure} of $(f,\Phi)$.
 \end{definition}

\begin{remark}
If $\varPhi=\{\varphi_n\}_{n\geq1}$ is \emph{additive} in the sense that $\varphi_n(x)=\varphi(x)+\varphi(fx)+\cdots+\varphi(f^{n-1}x)\triangleq S_n\varphi(x)$ for some continuous function $\varphi: X\to \mathbb{R}$, we simply denote the topological pressure $P_{\mathrm{top}}(f, \varPhi)$ as $P_{\mathrm{top}}(f, \varphi)$.
\end{remark}


Let $\mathcal{M}_f(X)$ denote the space of all $f-$invariant measures on $X$. For $\mu\in\mathcal{M}_f(X)$, let $h_\mu(f)$ denote the metric entropy of $f$ with respect to $\mu$ (see Walters' book \cite{wal82} for details of metric entropy), and let
$$\mathcal{L}_*(\Phi,\mu)=\lim_{n\to\infty}\frac 1n \int\phi_nd\mu.$$
The existence of the above limit follows from a sub-additive argument.  In \cite{cfh}, the authors proved the following variational principle.

\begin{theorem}\label{subaddvariprin}
Let $f:X\to X$ be a continuous transformation on a compact metric space $X$, and $\Phi=\{\phi_n\}_{n\ge 1}$ a sub-additive potential on $X$, then we have
$$P_{\mathrm{top}}(f,\Phi)=\sup\Big\{h_\mu(f)+\mathcal{L}_*(\Phi,\mu): \mu\in\mathcal{M}_f(X),~\mathcal{L}_*(\Phi,\mu)\neq -\infty\Big\}.$$
\end{theorem}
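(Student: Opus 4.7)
The plan is to establish the two directions of the variational principle separately: the easy inequality $P_{\topp}(f, \Phi) \geq \sup_\mu\{h_\mu(f) + \mathcal{L}_*(\Phi, \mu)\}$, and the harder reverse inequality, obtained by constructing a suitably concentrated invariant measure from a sequence of asymptotically maximizing $(n, \epsilon)$-separated sets.

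For the easy direction, fix $\mu \in \mathcal{M}_f(X)$ with $\mathcal{L}_*(\Phi, \mu) > -\infty$ and a finite measurable partition $\xi$ of $X$ with $\diam(\xi) < \epsilon$. For each atom $A$ of $\xi^n := \bigvee_{i=0}^{n-1} f^{-i}\xi$, pick $x_A \in A$ with $\phi_n(x_A)$ approximately equal to $\sup_A \phi_n$; then $\{x_A\}$ is $(n, \epsilon)$-separated. Jensen's inequality applied to the probability weights $(\mu(A))_A$ yields
$$H_\mu(\xi^n) + \int \phi_n \, d\mu \;\leq\; \log \sum_A e^{\phi_n(x_A)} \;\leq\; \log P_n(f, \Phi, \epsilon).$$
Dividing by $n$, letting $n \to \infty$, and then $\diam(\xi) \to 0$, gives $h_\mu(f) + \mathcal{L}_*(\Phi, \mu) \leq P_{\topp}(f, \Phi)$.

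For the reverse inequality, fix $\delta > 0$ and choose, for each $n$, an $(n, \epsilon)$-separated set $E_n$ with $Z_n := \sum_{x \in E_n} e^{\phi_n(x)} \geq e^{n(P_{\topp}(f, \Phi) - \delta)}$. Form the atomic probability measures $\nu_n := Z_n^{-1} \sum_{x \in E_n} e^{\phi_n(x)} \delta_x$ and their empirical averages $\mu_n := \frac{1}{n} \sum_{k=0}^{n-1} f^k_* \nu_n$; pass to a subsequence $n_j \to \infty$ along which $\mu_{n_j}$ converges weak-$*$ to some $\mu \in \mathcal{M}_f(X)$. The identity $\log Z_n = H(\nu_n) + \int \phi_n \, d\nu_n$ reduces the job to bounding the two terms on the right separately from above. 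For the entropy part, choose a partition $\xi$ of diameter less than $\epsilon$ whose boundary is $\mu$-null; since $\xi^n$ separates the points of $E_n$, $H(\nu_n) = H_{\nu_n}(\xi^n)$, and the Misiurewicz partition argument with block length $q$, summed over the starting positions $r = 0, 1, \ldots, q-1$, yields
$$\frac{1}{n} H_{\nu_n}(\xi^n) \;\leq\; \frac{1}{q} H_{\mu_n}(\xi^q) + O\!\left( \frac{q^2 \log |\xi|}{n} \right),$$
from which $\limsup_n \frac{1}{n} H(\nu_n) \leq h_\mu(f)$ follows on letting $n_j \to \infty$ and then $q \to \infty$.

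The crucial new ingredient, and the place where sub-additivity of $\Phi$ is used essentially, is the potential estimate. For each $r \in \{0, 1, \ldots, q-1\}$, sub-additivity gives
$$\phi_n \;\leq\; \phi_r + \sum_{j=0}^{a_r - 1} \phi_q \circ f^{r+jq} + \phi_{b_r} \circ f^{r + a_r q}$$
with $a_r = \lfloor (n-r)/q \rfloor$ and $b_r = n - r - a_r q$. Summing this inequality over $r$, the middle double sum covers each index in $\{0, 1, \ldots, n-q\}$ exactly once, so integration against $\nu_n$ produces
$$q \int \phi_n \, d\nu_n \;\leq\; n \int \phi_q \, d\mu_n + C(q),$$
with $C(q)$ independent of $n$. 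Dividing by $nq$, passing along $n_j \to \infty$, and then taking the infimum over $q$ (equal to $\mathcal{L}_*(\Phi, \mu)$ by Fekete's lemma applied to the sub-additive sequence $\int \phi_q \, d\mu$), one obtains $\limsup_n \frac{1}{n} \int \phi_n \, d\nu_n \leq \mathcal{L}_*(\Phi, \mu)$. Adding the entropy and potential bounds gives $P_{\topp}(f, \Phi) - \delta \leq h_\mu(f) + \mathcal{L}_*(\Phi, \mu)$, and letting $\delta \to 0$ completes the proof. The main obstacle is exactly this sub-additive averaging: since $\phi_n$ varies with $n$, one cannot telescope to $\int \phi_1 \, d\mu_n$ as in the classical additive Misiurewicz proof, and the bound above---trading a single fixed block length $q$ for a uniform-in-$n$ error---is the replacement of the additive telescoping that makes the argument work.
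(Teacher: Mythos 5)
This theorem is not proved in the paper; it is quoted from Cao--Feng--Huang \cite{cfh}, so there is no internal proof to compare against. Your argument follows the same Misiurewicz-style strategy as the original source --- weighted empirical measures built from nearly maximizing separated sets, an entropy estimate via a boundary-null partition, and a sub-additive averaging to replace the additive telescoping --- and the hard direction is essentially sound, including the identity $\log Z_n = H(\nu_n) + \int\phi_n\,d\nu_n$, the observation that $\xi^n$ separates an $(n,\epsilon)$-separated set when $\diam(\xi)<\epsilon$, the covering of the indices $\{0,\dots,n-q\}$ by the shifted blocks, and the use of Fekete's lemma to identify $\inf_q \tfrac1q\int\phi_q\,d\mu$ with $\mathcal{L}_*(\Phi,\mu)$.

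There is, however, a genuine gap in the easy direction. You claim that if $\diam(\xi)<\epsilon$ and one picks one point $x_A$ in each atom $A$ of $\xi^n$, then $\{x_A\}$ is $(n,\epsilon)$-separated. This is false: two points lying in adjacent atoms of a small-diameter partition can be arbitrarily close (and hence arbitrarily $d_n$-close), so distinct atoms of $\xi^n$ do not give separated points. Small diameter of $\xi$ is the right hypothesis for the reverse implication that you use in the hard direction (an $(n,\epsilon)$-separated set \emph{is} separated by $\xi^n$), but not for what you need here. The standard repair, due to Walters, is to shrink the atoms $A_j$ of $\xi$ to compact subsets $B_j\subset A_j$ with small complementary measure, so that $b:=\min_{i\neq j}d(B_i,B_j)>0$; working with the partition $\beta=\{B_0,B_1,\dots,B_k\}$ (where $B_0$ is the leftover), one pays only a bounded conditional entropy $H_\mu(\xi\mid\beta)$ and, since a $d_n$-ball of radius $<b/2$ meets at most $2^n$ atoms of $\beta^n$, one obtains $h_\mu(f,\xi)+\mathcal{L}_*(\Phi,\mu)\le P_{\mathrm{top}}(f,\Phi)+\log 2$; the $\log 2$ is then removed by the usual passage to $f^m$ with the potential $\{\phi_{mn}\}_n$. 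Without some version of this step, the inequality $\sum_A e^{\phi_n(x_A)}\le P_n(f,\Phi,\epsilon)$ that you rely on is unjustified.
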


Though it is unknown whether the variational principle holds for super-additive topological pressure, Cao, Pesin and Zhao gave an alternative definition via variational principle in \cite{cpz}. Given a sequence of super-additive continuous potentials $\Psi=\{\psi_n\}_{n\geq1}$ on a compact dynamical system $(X, f)$, the super-additive topological pressure of $\Psi$ is defined as
\[P_{\mathrm{var}}(f, \Psi) := \sup\Big\{h_\mu(f)+\mathcal{L}_*(\Psi,\mu): \mu\in\mathcal{M}_f(X)\Big\}.\]
where
$$\mathcal{L}_*(\Psi,\mu)=\lim_{n\to\infty}\frac 1n \int\psi_nd\mu=\sup_{n\geq1}\frac 1n \int\psi_nd\mu.$$
The second equality is due to the standard sub-additive argument. 

          \subsubsection{Measure theoretic pressure}
We first follow the approach in \cite{Pes97} to give  the definitions of topological pressures on arbitrary subsets. Given a sub-additive potential  $\Phi=\{\phi_n\}_{n\ge 1}$ on $X$, a subset $Z\subset X$ and $\alpha\in \mathbb{R}$, let
\[
\begin{aligned}
M(Z,\Phi,\alpha,N,\epsilon)=\inf\Big\{&\sum_i
\exp\bigr(-\alpha n_i+\sup_{y\in B_{n_i}(x_i,\epsilon)}\phi_{n_i}(y)\bigr):\\
&\bigcup_{i}B_{n_i}(x_i,\epsilon)\supset Z, \, x_i\in X \text{ and } n_i\ge N \text{ for all } i\Big\}.
\end{aligned}
\]
Since $M(Z,\Phi,\alpha,N,\epsilon)$ is monotonically
increasing with $N$, let
\begin{eqnarray} \label{c-p}
m(Z,\Phi,\alpha,\epsilon)
:=\lim_{N\rightarrow\infty}M(Z,\Phi,\alpha,N,\epsilon).
\end{eqnarray}
We denote the jump-up point of $m(Z,\Phi,\alpha,\epsilon)$ by
\[
P_{Z}(f,\Phi,\epsilon) =\inf \{ \alpha:
m(Z,\Phi,\alpha,\epsilon)=0 \}
=\sup\{ \alpha: m(Z,\Phi,\alpha,\epsilon)=+\infty\}.
\]

\begin{definition}\label{defPmu*} We call the quantity
\begin{eqnarray*}
P_{Z}(f,\Phi)=\liminf_{\epsilon\to 0} P_{Z}(f,\Phi,\epsilon)
\end{eqnarray*}
the \emph{topological pressure} of $(f, \Phi)$ on the set $Z$ (see \cite{fh16} for the weighted version of this quantity).
\end{definition}

Similarly, for $\alpha\in \mathbb{R}$ and $Z\subset X$, define
\[
R(Z,\Phi,\alpha,N,\epsilon)=\inf\Big\{\sum_i
\exp\bigr(-\alpha N+\sup_{y\in B_{N}(x_i,\epsilon)} \phi_N(y) \bigr):\\
\bigcup_{i}B_{N}(x_i,\epsilon)\supset Z,\,x_i\in X\Big\}.
\]
 We set
$$
\begin{aligned}
\underline{r}(Z,\Phi,\alpha,\epsilon)
&=\liminf_{N\rightarrow\infty}R(Z,\Phi,\alpha,N,\epsilon), \\
\overline{r}(Z,\Phi,\alpha,\epsilon)
&=\limsup_{N\rightarrow\infty}R(Z,\Phi,\alpha,N,\epsilon)
\end{aligned}
$$
and define the jump-up points of
$\underline{r}(Z,\Phi,\alpha,\epsilon)$ and
$\overline{r}(Z,\Phi,\alpha,\epsilon)$ as
$$
\begin{aligned}
\underline{CP}_{Z}(f,\Phi,\epsilon)&=\inf\{\alpha:
\underline{r}(Z,\Phi,\alpha,\epsilon)=0\}
=\sup\{ \alpha: \underline{r}(Z,\Phi,\alpha,\epsilon)=+\infty \},\\
\overline{CP}_{Z}(f,\Phi,\epsilon)&=\inf \{ \alpha:
\overline{r}(Z,\Phi,\alpha,\epsilon)=0 \}
=\sup\{ \alpha: \overline{r}(Z,\Phi,\alpha,\epsilon)=+\infty\}
\end{aligned}
$$
respectively.

\begin{definition}
We call the quantities
$$
\underline{CP}_{Z}(f,\Phi)=\liminf_{\epsilon\to 0}
\underline{CP}_{Z}(f,\Phi,\epsilon)\,\,\text{and}\,\,
\overline{CP}_{Z}(f,\Phi)=\liminf_{\epsilon\to 0}
\overline{CP}_{Z}(f,\Phi,\epsilon)
$$
the \emph{lower} and \emph{upper topological pressures} of $(f,\Phi)$ on the set $Z$ respectively.
\end{definition}

 Given an $f$-invariant measure
$\mu$, let
$$
\begin{aligned}
P_{\mu}(f,\Phi,\epsilon)
=\inf\{P_Z(f,\Phi,\epsilon)\colon\mu (Z)=1\}
\end{aligned}
$$
and then we call the following quantity
\begin{equation*}\label{pressure1}
P_{\mu}(f,\Phi):=\liminf_{\epsilon\to 0}P_{\mu}(f,\Phi,\epsilon)
\end{equation*}
the \emph{measure theoretic pressure} of $(f,\Phi)$ with respect to $\mu$.
 Let further
\begin{gather*}
\underline{CP}_{\mu}(f,\Phi,\epsilon)
=\lim_{\delta\to 0}\inf\{\underline{CP}_{Z}(f,\Phi,\epsilon)
\colon\mu (Z)\ge 1-\delta\}, \\
\overline{CP}_{\mu}(f,\Phi,\epsilon)
=\lim_{\delta\to 0}\inf\{\overline{CP}_{Z}(f,\Phi,\epsilon)
\colon\mu (Z)\ge 1-\delta\}.
\end{gather*}
We call the following quantities
\begin{eqnarray*}
\underline{CP}_{\mu}(f,\Phi)=\liminf_{\epsilon\to 0}
\underline{CP}_{\mu}(f,\Phi,\epsilon),\ \ \
\overline{CP}_{\mu}(f,\Phi)=\liminf_{\epsilon\to 0}
\overline{CP}_{\mu}(f,\Phi,\epsilon)
\end{eqnarray*}
the \emph{lower and upper measure theoretic pressures} of $(f,\Phi)$ with respect to  $\mu$ respectively.
It is proved in \cite[Theorem A]{chz} that
\begin{eqnarray}\label{metric-ent-form}
P_{\mu}(f,\Phi)=\underline{CP}_{\mu}(f,\Phi)=\overline{CP}_{\mu}(f,\Phi)=h_{\mu}(f)+ \mathcal{L}_*(\Phi,\mu)
\end{eqnarray}
for any $f$-invariant ergodic measure $\mu$ with $\mathcal{L}_*(\Phi,\mu)\neq-\infty$.

\begin{remark}In fact, one can show that
\[
\mathcal{P}_\mu(f,\Phi)=\inf\{\mathcal{P}_Z(f,\Phi):\mu(Z)=1\}
\]
here $\mathcal{P}$ denotes either $P$ or $\underline{CP}$ or $\overline{CP}$, see \cite{zhao17} for a proof.
\end{remark}

\subsection{Dimensions of sets and measures}
Now we recall the definitions of Hausdorff and box dimensions of subsets and measures. Given a subset $Z\subset X$, For any $s\ge 0$, let
\[
\mathcal{H}_{\delta}^{s}(Z)=\inf\Big\{\sum\limits_{i=1}^{\infty}(\mbox{diam} U_{i})^{s}:\{U_{i}\bigr\}_{i\ge1}\
\mbox{is\ a cover\ of } Z \ \mbox{with }  \mbox{diam}U_{i}\le \delta, \forall i\ge1 \Big\}
\]
and
\begin{eqnarray*}
\mathcal{H}^{s}(Z)=\lim\limits_{\delta\rightarrow
0}\mathcal{H}_{\delta}^{s}(Z).
\end{eqnarray*}
The above limit exists, though the limit may be infinity. We call $\mathcal{H}^{s}(Z)$ the $s-$dimensional Hausdorff measure of $Z$.

\begin{definition}\label{dim}  The following jump-up value of $\mathcal{H}^{s}(Z)$
$$\dim_{H}Z=\inf\{s:\mathcal{H}^{s}(Z)=0\}=\sup\{s:\mathcal{H}^{s}(Z)=\infty\}$$
is called the \emph{Hausdorff
dimension} of $Z$.
The \emph{lower and upper box dimension} of $Z$ are defined respectively
by
$$\underline{\dim}_BZ=\liminf\limits_{\delta\to0}\frac{\log N(Z,\delta)}{-\log\delta}\ \text{and}\ \overline{\dim}_BZ=\limsup\limits_{\delta\to0}\frac{\log N(Z,\delta)}{-\log\delta},$$
where $N(Z,\delta)$ denotes the least number of balls of radius $\delta$ that are needed to cover the set $Z$. If $\underline{\dim}_BZ=\overline{\dim}_BZ$, we will denote the common value by $\dim_BZ$ and call it the \emph{box dimension} of $Z$.
\end{definition}

The following two results are well-known in the field of fractal geometry, e.g., see Falconer's book \cite{fal03}  for proofs.

\begin{lemma}\label{holderdimension}
Let $X$ and $Y$ be metric spaces. For any $r\in(0, 1)$, $\Phi: X \to Y$ is an onto, $(C, r)$-H$\ddot{o}$lder continuous map for some $C>0$. Then
\[\dim_H Y \leq r^{-1} \dim_H X,\quad \underline{\dim}_B Y \leq r^{-1} \underline{\dim}_B X \quad \text{and} \quad \overline{\dim}_B Y \leq r^{-1} \overline{\dim}_B X.\]
\end{lemma}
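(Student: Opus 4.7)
The plan is to prove all three inequalities by the same mechanism: push a good cover of $X$ forward through $\Phi$, use the Hölder estimate to control the new diameters, and then read off the comparison of dimensions.

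For the Hausdorff bound, I would start with an arbitrary $\delta$-cover $\{U_i\}_{i\ge 1}$ of $X$. Because $\Phi$ is onto, $\{\Phi(U_i)\}_{i\ge 1}$ covers $Y$, and the Hölder estimate gives $\operatorname{diam}\Phi(U_i)\le C(\operatorname{diam}U_i)^r\le C\delta^r$. For any $s\ge 0$,
\[
\sum_i (\operatorname{diam}\Phi(U_i))^{s/r}\le C^{s/r}\sum_i(\operatorname{diam}U_i)^s,
\]
so taking the infimum over $\delta$-covers gives $\mathcal{H}^{s/r}_{C\delta^r}(Y)\le C^{s/r}\mathcal{H}^{s}_\delta(X)$. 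Letting $\delta\to 0$ yields $\mathcal{H}^{s/r}(Y)\le C^{s/r}\mathcal{H}^{s}(X)$. If $s>\dim_H X$ then $\mathcal{H}^{s}(X)=0$, so $\mathcal{H}^{s/r}(Y)=0$ and $\dim_H Y\le s/r$. Taking the infimum over such $s$ produces $\dim_H Y\le r^{-1}\dim_H X$.

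For the two box dimension statements, I would run the analogous argument at the level of covering numbers. Cover $X$ by $N(X,\delta)$ balls of radius $\delta$; the Hölder bound shows each image is contained in a set of diameter at most $C(2\delta)^r$, hence in a ball of radius $\eta_\delta:=2^{r}C\delta^{r}$ centered at a chosen point of its image. This supplies a cover of $Y$ and gives
\[
N(Y,\eta_\delta)\le N(X,\delta).
\]
Writing $\eta_\delta=2^rC\delta^r$, we have $-\log\eta_\delta=-r\log\delta-\log(2^rC)$, so
\[
\frac{\log N(Y,\eta_\delta)}{-\log\eta_\delta}\le \frac{\log N(X,\delta)}{-r\log\delta-\log(2^rC)}.
\]
Since $\eta_\delta\to 0$ monotonically as $\delta\to 0$ and the extra constant is negligible compared to $-r\log\delta$, passing to $\limsup$ and $\liminf$ as $\delta\to 0$ yields the claimed inequalities for $\overline{\dim}_B$ and $\underline{\dim}_B$, respectively. (The fact that arbitrary $\varepsilon\to 0$ rather than the particular subsequence $\{\eta_\delta\}$ can be handled by choosing, for each small $\varepsilon$, a $\delta$ with $\eta_\delta\le\varepsilon<\eta_{2\delta}$, which only costs a bounded factor.)

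There is no real obstacle here; the argument is a textbook one from fractal geometry. The only points that require a little care are (i) using the surjectivity of $\Phi$ to ensure $\{\Phi(U_i)\}$ actually covers $Y$, and (ii) in the box dimension step, converting an image set of diameter $\le C(2\delta)^r$ into a covering ball of comparable radius, and then making the substitution $\varepsilon=\eta_\delta$ cleanly so that the additive constant $\log(2^rC)$ is absorbed in the limit. Because the paper cites Falconer's book for this lemma, I would keep the write-up to the minimal version above rather than reproducing a longer exposition.
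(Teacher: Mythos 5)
Your proof is correct and is precisely the standard pushforward-of-covers argument that the paper defers to by citing Falconer's book; the paper gives no proof of its own. Both the Hausdorff estimate $\mathcal{H}^{s/r}(Y)\le C^{s/r}\mathcal{H}^s(X)$ and the covering-number comparison $N(Y,C(2\delta)^r)\le N(X,\delta)$, together with the observation that the additive constant $\log(2^rC)$ is negligible in the limit, are exactly the textbook route, so nothing further is needed.
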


\begin{corollary}\label{lipdimension}
Let $X$ and $Y$ be metric spaces, and let $\Phi: X \to Y$ be an onto, Lipschitz continuous map. Then
\[\dim_H Y \leq \dim_H X,\quad \underline{\dim}_B Y \leq \underline{\dim}_B X \quad \text{and} \quad \overline{\dim}_B Y \leq \overline{\dim}_B X.\]
\end{corollary}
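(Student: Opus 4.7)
The statement is an immediate consequence of Lemma \ref{holderdimension}, so the plan is simply to reduce Lipschitz continuity to the $r=1$ case of Hölder continuity and invoke that lemma.

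More precisely, I would first observe that by the definition of Lipschitz continuity there exists a constant $C>0$ such that $d_Y(\Phi(x),\Phi(x')) \leq C\, d_X(x,x')$ for all $x,x' \in X$. This is exactly the statement that $\Phi$ is $(C,r)$-Hölder continuous with exponent $r=1$.

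Since $\Phi$ is also assumed to be onto, the hypotheses of Lemma \ref{holderdimension} are met with $r=1$. Substituting $r=1$ into the three inequalities supplied by that lemma yields
\[
\dim_H Y \leq \dim_H X,\qquad \underline{\dim}_B Y \leq \underline{\dim}_B X,\qquad \overline{\dim}_B Y \leq \overline{\dim}_B X,
\]
which is the desired conclusion.

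There is no substantive obstacle here; the corollary is a formal specialization. The only point worth flagging is that Lemma \ref{holderdimension} is stated for $r\in(0,1)$, so strictly speaking one should either (i) note that the same proof of the lemma works verbatim for $r=1$, or (ii) for any fixed $r\in(0,1)$ observe that a Lipschitz map with constant $C$ is also $(C\,\mathrm{diam}(X)^{1-r},r)$-Hölder continuous whenever $X$ has finite diameter (and reduce to that diameter-bounded case by a countable decomposition if necessary, using countable stability of Hausdorff dimension and the fact that the box dimension inequality reduces to the same covering argument on bounded pieces), and then let $r\to 1^-$. Either route is routine, so I would simply remark that the proof of Lemma \ref{holderdimension} applies with $r=1$ without modification.
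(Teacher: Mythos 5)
The paper does not actually prove this corollary: both Lemma \ref{holderdimension} and Corollary \ref{lipdimension} are listed as standard facts with a pointer to Falconer's book, so there is no "paper proof" to compare against. Your reduction to the H\"older lemma is the intended reading, and you are right to flag that the lemma as literally stated excludes $r=1$; both of your fixes are sound. Option (i) is the cleaner one (the cover-image argument that proves the H\"older lemma works verbatim at $r=1$), and for option (ii) your use of countable stability of Hausdorff dimension to remove the boundedness restriction is correct, with the additional remark that box dimensions are in any case only meaningful for totally bounded sets, so the boundedness caveat is harmless there. In short, the proposal is correct and supplies details the paper simply outsources to a reference.
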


Given a Borel probability measure $\mu$ on $X$, the following quantity
\[
\begin{aligned}
\dim_H\mu &=\inf\{\dim_HZ: Z\subset X~~\text{and}~~\mu(Z)=1\}\\
&=\lim_{\delta\to 0} \inf\{\dim_HZ: Z\subset X~~\text{and}~~\mu(Z)\ge1-\delta\}
\end{aligned}
\]
is called the \emph{Hausdorff dimension of the measure} $\mu$. Similarly, we call the following two quantities
\[
\underline{\dim}_B\mu=\lim_{\delta\to 0} \inf\{\underline{\dim}_BZ: Z\subset X~~\text{and}~~\mu(Z)\ge1-\delta\}
\]
and
\[
\overline{\dim}_B\mu=\lim_{\delta\to 0} \inf\{\overline{\dim}_BZ: Z\subset X~~\text{and}~~\mu(Z)\ge1-\delta\}
\]
the \emph{lower box dimension} and \emph{upper box dimension} of $\mu$, respectively.

If $\mu$ is a finite measure on $X$ and there exists $d\geq 0$ such that
\[\lim_{r\to0}\frac{\log\mu(B(x, r))}{\log r} = d\]
for $\mu$-almost every $x\in X$, then
\[\dim_H\mu = \underline{\dim}_B\mu = \overline{\dim}_B \mu=d. \]
This  criterion was established by Young in \cite{young}.

\section{Statements of main results}\label{Results}
In this section, we will give the statements of the main results in this paper, and the proof will be postponed to the next section.
\subsection{Dimension approximation for uniformly expanding systems}
Let $f:M\to M$ be a smooth map of a $m_0$-dimensional compact smooth Riemannian manifold $M$, and $\Lambda$ a compact $f$-invariant subset of $M$. Let $\mathcal{M}_f(\Lambda)$ and $\mathcal{E}_f(\Lambda)$ denote respectively the set of all $f$-invariant measures and ergodic measures on $\Lambda$.

\subsubsection{Definitions of repeller and Lyapunov dimension}
We call $\Lambda$ a \emph{repeller} for $f$ or $f$ is \emph{expanding} on $\Lambda$ if
\begin{enumerate}
\item[(1)]there exists an open neighborhood $U$ of $\Lambda$ such that
$\Lambda=\{x\in U: f^n(x)\in U\,\, \text{for all}\,\, n\ge 0\}$;
\item[(2)]there is $\kappa > 1 $ such that
$$
\|D_xf (v) \| \ge \kappa \|v\|, \, \text{ for all } x \in \Lambda,\, \text{ and } v \in T_xM,
$$
where $\|\cdot\|$ is the norm induced by the Riemannian metric on $M$, and $D_xf:T_xM\rightarrow T_{f(x)}M$ is the differential operator.
\end{enumerate}


 Given an $f$-invariant ergodic measure $\mu$ supported on the repeller $\Lambda$. Let $\lambda_1(\mu)\ge \lambda_2(\mu)\ge \cdots \ge \lambda_{m_0}(\mu)$ and $h_{\mu}(f)$ denote the Lyapunov exponents and the measure theoretic entropy of $(f,\mu)$ respectively, we refer the reader to \cite{bp13} and \cite{wal82} for detailed description of Lyapunov exponents and the the measure theoretic entropy.  We further define the \emph{Lyapunov dimension} of $\mu$ as follows:
 \begin{equation}\label{Ldim}
 \dim_{\mathrm{L}}\mu:=\left \{
 \begin{array}{ll}
 \ell +\frac{h_{\mu}(f)-\lambda_{m_0}(\mu)-\cdots-\lambda_{m_0-\ell +1}(\mu)}{\lambda_{m_0-\ell}(\mu)}, & h_\mu(f)\ge \lambda_{m_0}(\mu) \\
 \frac{h_\mu(f)}{\lambda_{m_0}(\mu)}, & 0\le h_\mu(f)< \lambda_{m_0}(\mu)
 \end{array}
 \right.
 \end{equation}
 where $\ell=\max\{i: \lambda_{m_0}(\mu)+\cdots+\lambda_{m_0-i +1}(\mu)\le h_{\mu}(f)\}$.

 The original definition of Lyapunov dimension in \cite{AY,KY,L81} is defined only for hyperbolic systems as follows: assume that $\nu$ is an ergodic measure of a smooth diffeomorphism $f$  with Lyapunov exponents $\lambda_1\ge\cdots\ge \lambda_u>0\ge \lambda_{u+1}\ge\cdots\ge \lambda_{m_0}$,  then the
the Lyapunov dimension is
 \[
 \mathrm{Lya}\dim\,\nu=\ell +\frac{\lambda_1+\cdots+\lambda_u+\cdots+\lambda_{\ell}}{|\lambda_{\ell+1}|}
 \]
where $\ell=\max\{i:\lambda_1+\cdots+\lambda_i\ge 0\}$. Assume further that $\nu$ is a SRB measure, then $h_\nu(f)=\lambda_1+\cdots+\lambda_u$. In consequence,
\[
\mathrm{Lya}\dim\,\nu=\ell +\frac{h_\nu(f)+\lambda_{u+1}+\cdots+\lambda_{\ell}}{|\lambda_{\ell+1}|}
\]
and $\ell=\max\{i:-\lambda_{u+1}-\cdots-\lambda_i\ge h_\nu(f)\}$. Hence, the definition in \eqref{Ldim} is a reasonable substitute. For a $C^1$ expanding map $f$,  Feng and Simon \cite{FK20} defined the Lyapunov dimension of an ergodic measure as the zero of the measure theoretic pressure $P_\mu(f,\Phi_f(t))=0$ (see \eqref{subadditive potentials}). In this paper, we will prove the unique solution of the equation $P_\mu(f,\Phi_f(t))=0$ is indeed our definition of Lyapunov dimension (see Theorem \ref{A}).  Furthermore, this paper shows that the Lyapunov dimension of an ergodic measure  defined in \eqref{Ldim} equals to its Carath\'{e}odory singular dimension (see Proposition \ref{dimequal}), so  the Carath\'{e}odory singular dimension (see Section \ref{csd} for the detailed definition) can be regarded as a geometric explanation of the Lyapunov dimension.

\subsubsection{Singular valued potentials}\label{svp} Let $\Lambda$ be a repeller of a smooth map $f:M\to M$.
Given $x\in\Lambda$ and $n\ge 1$, consider the differentiable operator $D_xf^n: T_xM\to T_{f^n(x)}M$ and  denote the singular values of $D_xf^n$ (square roots of the eigenvalues of $(D_xf^n)^*D_xf^n$) in the decreasing order by
\begin{equation}\label{sing-val}
\alpha_1(x,f^n)\ge\alpha_2(x,f^n)\ge\dots\ge\alpha_{m_0}(x,f^n).
\end{equation}
For $t\in [0,m_0]$, set
\begin{equation}\label{sing-val-phi}
\varphi^{t}(x,f^n):
=\sum_{i=m_0-[t]+1}^{m_0}\log\alpha_i(x,f^n)
+(t-[t])\log\alpha_{m_0-[t]}(x,f^n).
\end{equation}
Since $f$ is smooth, the functions $x\mapsto\alpha_i(x,f^n)$, $x\mapsto\varphi^t(x,f^n)$ are continuous for any $n\ge 1$. It is easy to see that for all $n,\ell\in\mathbb{N}$
$$
\varphi^t(x,f^{n+\ell})\ge\varphi^t(x,f^n)+\varphi^t(f^n(x),f^\ell).
$$
It follows that the sequence of functions
\begin{equation}\label{subadditive potentials}
\Phi_f(t):=\{-\varphi^t(\cdot,f^n)\}_{n\ge 1}
\end{equation}
is sub-additive, which is called the \emph{sub-additive singular valued potentials}.

\subsubsection{Carath\'{e}odory singular dimension}\label{csd}
We recall the definition of Carath\'{e}odory singular dimension of a repeller which is introduced in \cite{cpz}.

Let $\Phi_f(t)=\{-\varphi^{t}(\cdot,f^n)\}_{n\ge 1}$. Given a subset $Z\subseteq \Lambda$, for each  small number $r>0$,  let
\[
m(Z,t,r):=\lim_{N\to\infty}\inf\Big\{\sum_i\exp\bigr(\sup_{y\in B_{n_i}(x_i,r)}-\varphi^{t}(y,f^{n_i})\bigr)\Big\},
\]
where the infimum is taken over all collections $\{B_{n_i}(x_i,r)\}$ of Bowen's balls with $x_i\in\Lambda$, $n_i\ge N$ that cover $Z$. It is easy to see that there is a jump-up value
\begin{equation}\label{dimC}
\dim_{C,r}Z:=\inf\{t: m(Z,t,r)=0\}=\sup\{t: m(Z,t,r)=+\infty\}.
\end{equation}
The following quantity \begin{equation}\label{Cdim}
\displaystyle{\dim_{C}Z:=\liminf_{r\to 0}\dim_{C,r}Z }\end{equation} is called the \emph{Carath\'{e}odory singular dimension of $Z$}. Particularly, the Carath\'{e}odory singular dimension of the repeller $\Lambda$ is independent of the parameter $r$ for small values of $r>0$ (see \cite[Theorem 4.1]{cpz}).

For each $f$-invariant measure $\mu$ supported on $\Lambda$, let
$$
\dim_{C,r}\mu:=\inf\{\dim_{C,r}Z:  \mu(Z)=1 \},
$$
and the following quantity
\[
 \dim_{C}\mu:=\liminf_{r\to 0}\dim_{C,r}\mu
\]
is called the \emph{Carath\'{e}odory singular dimension}  of the measure $\mu$.

\subsubsection{Approximation of Carath\'{e}odory singular dimension of repellers}
Given a repeller $\Lambda$ of a $C^{1+\alpha}$ map $f$, the following result shows that the zero of the measure theoretic pressure function is exactly the Lyapunov dimension of an ergodic measure $\mu\in \mathcal{E}_f(\Lambda)$, and the Lyapunov dimension of an ergodic measure of positive entropy can be approximated by the Carath\'{e}odory singular dimension of a sequence of invariant sets.
 Recall that $
\Phi_f(t):=\{-\varphi^t(\cdot,f^n)\}_{n\ge 1}
$
is the sub-additive singular valued potentials with respect to $f$ (See the definition in (\ref{subadditive potentials}).).

\begin{maintheorem}\label{A}
Let $f: M\to M$ be a $C^{1+\alpha}$ map of an $m_0$-dimensional compact smooth Riemannian manifold $M$, and $\Lambda$ a repeller of $f$. Then the following statements hold:
\begin{enumerate}
\item[(1)] for every $f$-invariant ergodic measure $\mu$ supported on $\Lambda$, we have that $$\dim_{\mathrm{L}}\mu=s_{\mu}$$
     where $s_{\mu}$ is the unique root of the equation $P_\mu(f, \Phi_f(t))=0$;
 \item[(2)] let $\mu$ be an $f$-invariant ergodic measure on $\Lambda$ with $h_{\mu}(f)>0$, for any $\varepsilon>0$ there exists an $f$-invariant compact subset $\Lambda_\varepsilon\subset\Lambda$ such that $\dim_C \Lambda_\varepsilon \rightarrow \dim_{\mathrm{L}}\mu$ as $\varepsilon$ approaching zero.
\end{enumerate}
\end{maintheorem}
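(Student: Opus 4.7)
My plan for part (1) is to combine the measure-theoretic pressure formula \eqref{metric-ent-form},
\[
P_\mu(f, \Phi_f(t)) = h_\mu(f) + \mathcal{L}_*(\Phi_f(t), \mu),
\]
with Oseledets' multiplicative ergodic theorem. Since $f$ is $C^1$ on a compact manifold, $|\log\alpha_i(\cdot, f^n)|$ is dominated by $Cn$, so dominated convergence applied to the Oseledets limits gives
\[
\mathcal{L}_*(\Phi_f(t), \mu) = -\sum_{i=m_0 - [t] + 1}^{m_0} \lambda_i(\mu) - (t - [t])\,\lambda_{m_0 - [t]}(\mu).
\]
The expanding hypothesis forces each $\lambda_i(\mu) \geq \log\kappa > 0$, so $t \mapsto P_\mu(f, \Phi_f(t))$ is continuous, piecewise linear, and strictly decreasing from $h_\mu(f) \geq 0$ at $t=0$ to $h_\mu(f) - \sum_i \lambda_i(\mu) \leq 0$ at $t = m_0$ (the latter by Ruelle's inequality applied to the expanding case). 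A unique root $s_\mu$ therefore exists, and a short case split---comparing $h_\mu(f)$ with $\lambda_{m_0}(\mu)$ and, when relevant, identifying the index $\ell$ in \eqref{Ldim}---matches $s_\mu$ with the two branches of that formula.

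For part (2), the plan is to feed the Katok-type approximation for $C^{1+\alpha}$ non-conformal repellers established in \cite{cpz} into the Bowen-type characterization of $\dim_{C}$. For each $\varepsilon > 0$ that construction yields a compact $f$-invariant subset $\Lambda_\varepsilon \subset \Lambda$ bearing a continuous invariant splitting of $T_{\Lambda_\varepsilon} M$ along which the exponential growth rates of $Df^n$ approximate the Lyapunov spectrum of $\mu$ to within $\varepsilon$ uniformly in the base point, with $h_{\mathrm{top}}(f|_{\Lambda_\varepsilon}) > h_\mu(f) - \varepsilon$. By the Bowen-type equation in \cite[Theorem 4.1]{cpz}, $\dim_{C} \Lambda_\varepsilon$ equals the unique root $t_\varepsilon$ of $P_{\mathrm{top}}(f|_{\Lambda_\varepsilon}, \Phi_f(t)) = 0$. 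Applying the sub-additive variational principle (Theorem \ref{subaddvariprin}),
\[
P_{\mathrm{top}}(f|_{\Lambda_\varepsilon}, \Phi_f(t)) = \sup_{\nu \in \mathcal{M}_f(\Lambda_\varepsilon)} \bigl( h_\nu(f) + \mathcal{L}_*(\Phi_f(t), \nu) \bigr),
\]
and the uniform control of $\tfrac{1}{n}\varphi^t(\cdot, f^n)$ on $\Lambda_\varepsilon$ forces $\mathcal{L}_*(\Phi_f(t), \nu)$ to lie within $O(\varepsilon)$ of $\mathcal{L}_*(\Phi_f(t), \mu)$ for every such $\nu$. Combined with $h_{\mathrm{top}}(f|_{\Lambda_\varepsilon}) \to h_\mu(f)$, this yields uniform convergence on compact $t$-intervals of $P_{\mathrm{top}}(f|_{\Lambda_\varepsilon}, \Phi_f(t))$ to $h_\mu(f) + \mathcal{L}_*(\Phi_f(t), \mu)$, whence $t_\varepsilon \to s_\mu = \dim_{\mathrm{L}}\mu$ by the strict monotonicity established in part (1).

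The step I expect to be the main obstacle is exactly this uniformity of the singular-value approximation: one needs the Lyapunov estimate to hold for \emph{every} invariant measure carried by $\Lambda_\varepsilon$, not merely for a single measure weak-$*$ close to $\mu$. Otherwise the supremum defining the topological pressure could be dominated by auxiliary measures with Lyapunov exponents far from $\mu$'s, and the upper bound $\limsup_{\varepsilon\to 0}\dim_{C} \Lambda_\varepsilon \le \dim_{\mathrm{L}}\mu$ would collapse. The continuous dominated splitting from \cite{cpz} is exactly the tool that elevates the measure-theoretic asymptotics of $\mu$ to a uniform topological statement on the approximating horseshoes, and I expect the proof to hinge on translating that uniformity into the pressure estimates above.
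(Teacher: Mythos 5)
Your outline follows the paper's argument quite closely: part (1) runs exactly as you describe (formula \eqref{metric-ent-form}, continuity and strict monotonicity of $t\mapsto P_\mu(f,\Phi_f(t))$, Margulis--Ruelle at $t=m_0$, then a case split against $\lambda_{m_0}(\mu)$), and part (2) likewise feeds the splitting-and-entropy approximation of \cite{cpz} into the Bowen equation of \cite[Theorem 4.1]{cpz} via the sub-additive variational principle. Your self-diagnosis of the crucial ingredient is also correct: the continuous invariant splitting on $\Lambda_\varepsilon$ with uniform two-sided bounds on $\|D_xf^n|_{E_j}\|$ is exactly what upgrades the Lyapunov asymptotics of the single measure $\mu$ to a statement valid for \emph{every} $\nu\in\mathcal{M}_f(\Lambda_\varepsilon)$, which is what lets you control $\mathcal{L}_*(\Phi_f(t),\nu)$ uniformly in the supremum defining $P_{\mathrm{top}}(f|_{\Lambda_\varepsilon},\Phi_f(t))$.

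There is, however, one genuine gap in part (2): you quote the approximation from \cite{cpz} as giving only the one-sided bound $h_{\mathrm{top}}(f|_{\Lambda_\varepsilon}) > h_\mu(f)-\varepsilon$, but then invoke $h_{\mathrm{top}}(f|_{\Lambda_\varepsilon})\to h_\mu(f)$, which requires the matching upper bound $h_{\mathrm{top}}(f|_{\Lambda_\varepsilon}) \le h_\mu(f)+\varepsilon$. That upper bound is not a free consequence of the Lyapunov control you do have: if all exponents of $\nu\in\mathcal{M}_f(\Lambda_\varepsilon)$ are $\varepsilon$-close to those of $\mu$, Ruelle's inequality only caps $h_\nu(f)$ by $\sum_i\lambda_i(\mu)+O(\varepsilon)$, which can greatly exceed $h_\mu(f)$. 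Without the entropy upper bound, the supremum in $P_{\mathrm{top}}(f|_{\Lambda_\varepsilon},\Phi_f(t)) = \sup_\nu\bigl(h_\nu(f)+\mathcal{L}_*(\Phi_f(t),\nu)\bigr)$ could overshoot $P_\mu(f,\Phi_f(t))$, and the inequality $\limsup_{\varepsilon\to0}\dim_C\Lambda_\varepsilon\le\dim_{\mathrm{L}}\mu$ would collapse. The paper handles this by explicitly noting that \cite[Theorem 5.1]{cpz} needs to be slightly modified to give the two-sided bound $h_\mu(f)-\varepsilon\le h_{\mathrm{top}}(f|_{\Lambda_\varepsilon})\le h_\mu(f)+\varepsilon$; you would need to make the same observation. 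Once that is in place, the paper in fact closes the argument with a cleaner quantitative step than uniform convergence on compacta: since $t\mapsto\varphi^t(x,f^n)$ has slope bounded below by $n\log K$ with $K=\min_{x\in\Lambda}m(D_xf)>1$, a pressure difference of size $(m_0+1)\varepsilon$ forces $|\dim_C\Lambda_\varepsilon-\dim_{\mathrm{L}}\mu|\le(m_0+1)\varepsilon/K$, which you may prefer over the monotonicity-plus-uniform-convergence route.
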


Some comments on the previous theorem are in order. First, we would like to point out that it is enough to require $f$ to be a $C^1$ map in  the first
statement, however, the map of higher smoothness
$C^{1+\alpha}$ is crucial in the last statement as it allows us to utilize some powerful results of Pesin theory. Second, if $f$ is a local diffeomorphism preserving  an ergodic expanding measure $\mu$ of positive entropy, i.e., $(f,\mu)$ has only positive Lyapunov exponent, in this case one can also obtain a approximation result as in \cite{cpz} so that we can obtain the second statement in the previous theorem in this setting.  In \cite{San03}, for a $C^2$ interval map $f$ with
finitely many non-degenerate critical points, the author proved that the Hausdorff dimension of an expanding measure $\mu$ can be approximated gradually by the Hausdorff dimension of a sequence of repellers.

For each $f$-invariant ergodic measure $\mu$ supported on $\Lambda$, the following result shows that the Carath\'{e}odory singular dimension of $\mu$ is exactly its Lyapunov dimension.

\begin{proposition}\label{dimequal}Let $f: M\to M$ be a $C^{1}$ map of an $m_0$-dimensional compact smooth Riemannian manifold $M$, and $\Lambda$ a repeller for $f$. Then the following statements hold:
\begin{enumerate}
\item[(1)] for each subset $Z\subset \Lambda$, we have that
\[
\dim_C Z=t_Z
\]
where $t_Z$ is the unique root of the equation $P_Z(f, \Phi_f(t))=0$;
\item[(2)]for each $f$-invariant ergodic measure $\mu$ supported on $\Lambda$, we have that
\[
\dim_C\mu=\dim_{\mathrm{L}}\mu.
\]
\end{enumerate}
\end{proposition}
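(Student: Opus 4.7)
The plan is to reduce the Carath\'{e}odory singular dimension to the topological pressure of the subadditive singular valued potentials $\Phi_f(t)$, so that Theorem \ref{A} can then be invoked for part (2).

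First, I would record a key monotonicity property of the potentials. Since $\Lambda$ is a repeller with expansion constant $\kappa>1$, every singular value satisfies $\alpha_i(x,f^n)\ge \kappa^n$ for all $x\in\Lambda$, $n\ge 1$ and $1\le i\le m_0$. Consequently, for $0\le t<t'\le m_0$ and any $(x,n)$,
\[
\varphi^{t'}(x,f^n)-\varphi^{t}(x,f^n)\ge (t'-t)\,n\log\kappa.
\]
This immediately yields that, for every fixed $\epsilon>0$ and every subset $Z\subset\Lambda$, the function $t\mapsto P_Z(f,\Phi_f(t),\epsilon)$ is continuous and strictly decreasing with linear rate at least $\log\kappa$; the same is true after passing to $\liminf_{\epsilon\to 0}$. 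In particular each of these functions admits a unique zero.

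For part (1), I would compare the two Carath\'{e}odory-type constructions directly at a fixed scale $r>0$. Unwinding the definitions,
\[
m(Z,t,r)=m(Z,\Phi_f(t),0,r),
\]
where the right-hand side is the quantity \eqref{c-p} with $\alpha=0$ and $\Phi=\Phi_f(t)$. By the standard jump-up dichotomy defining $P_Z(f,\Phi_f(t),r)$, the quantity $m(Z,\Phi_f(t),0,r)$ is $0$ when $P_Z(f,\Phi_f(t),r)<0$ and $+\infty$ when $P_Z(f,\Phi_f(t),r)>0$. Combined with the strict monotonicity in $t$, this identifies $\dim_{C,r}Z$ with the unique root $t_Z(r)$ of $P_Z(f,\Phi_f(\cdot),r)=0$. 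The uniform Lipschitz-in-$t$ bound of rate $\log\kappa$, which is independent of $r$, then allows the root-finding operation to commute with $\liminf_{r\to 0}$: namely, $\liminf_{r\to 0}t_Z(r)$ is the unique root of $\liminf_{r\to 0}P_Z(f,\Phi_f(\cdot),r)=P_Z(f,\Phi_f(\cdot))$, giving $\dim_C Z=t_Z$.

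For part (2) I would invoke the remark following \eqref{metric-ent-form}, namely $P_\mu(f,\Phi)=\inf\{P_Z(f,\Phi):\mu(Z)=1\}$, together with its pointwise-in-$r$ analogue $P_\mu(f,\Phi,r)=\inf\{P_Z(f,\Phi,r):\mu(Z)=1\}$. Using part (1) applied to each $Z$ with $\mu(Z)=1$ and the strict monotonicity in $t$ once more, the infima transfer from $P_Z(f,\Phi_f(\cdot))$ to their roots, yielding $\dim_C\mu=s_\mu$, the unique root of $P_\mu(f,\Phi_f(t))=0$. Theorem \ref{A}(1) identifies $s_\mu$ with $\dim_{\mathrm{L}}\mu$, finishing the proof.

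The main obstacle I anticipate is the commutation of $\liminf_{r\to 0}$ with root-finding at both the set level (part (1)) and at the measure level (part (2)); both are handled by the uniform linear lower bound on $\varphi^{t'}-\varphi^{t}$ coming from expansion, which provides a uniform-in-$r$ slope estimate ensuring that the root of the $\liminf$ equals the $\liminf$ of the roots, and likewise for the infimum over sets of full $\mu$-measure.
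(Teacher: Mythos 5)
Your proof is correct and follows essentially the same route as the paper's. The observation that $m(Z,t,r)=m(Z,\Phi_f(t),0,r)$, so that the jump-up dichotomy at $\alpha=0$ identifies $\dim_{C,r}Z$ with the unique root $t_Z(r)$ of $P_Z(f,\Phi_f(\cdot),r)=0$, is exactly what the paper's two auxiliary parameters $\beta,\widetilde\beta$ are unwinding by hand; and the uniform lower slope $\log\kappa$ (with the matching upper slope $\log\max\|Df\|$ giving equicontinuity in $t$) does make the commutation of $\liminf_{r\to 0}$ with root-finding go through, both at the set level and at the measure level. For part (2) the paper instead takes a diagonal intersection $\widetilde Z=\bigcap_n Z_n$ with $\mu(\widetilde Z)=1$ to witness the upper bound, whereas you transfer the infimum over full-measure sets to an infimum over roots using the fixed-$r$ identity $P_\mu(f,\Phi_f(t),r)=\inf_{\mu(Z)=1}P_Z(f,\Phi_f(t),r)$; the two are equivalent reorganizations of the same estimate, and yours is arguably the more modular presentation.
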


\subsection{Dimension approximation in non-uniformly hyperbolic systems}In this section, we first recall an approximation result  in non-uniformly hyperbolic systems that are proved by Avila {\it et al} \cite{acw}, then we give the statement of our dimension approximation result in non-uniformly hyperbolic systems.

\subsubsection{Lyapunov exponents and holonomy maps}Let $f: M\to M$ be a  diffeomorphism  on an $m_0$-dimensional compact smooth  Riemannian manifold $M$. By the Oseledec's multiplicative ergodic theorem (see \cite{ose}), there exists a total measure set $\mathcal{O}\subset M$ such that, for each $x \in \mathcal{O}$ and each invariant measure $\mu$ there exist positive integers  $d_1(x),d_2(x),\cdots, d_{p(x)}(x)$, numbers $\lambda_1(x)>\lambda_2(x)>\cdots >\lambda_{p(x)}(x)$ and a splitting $$T_xM=E_1(x)\oplus E_2(x)\oplus\cdots \oplus E_{p(x)}(x)$$ satisfy  that
\begin{enumerate}
\item[(1)] $D_xf E_i(x)=E_i(f(x))$ for each $i$ and $\sum_{i=1}^{p(x)}d_i(x)=m_0$;
\item[(2)] for each $0\neq v\in E_i(x)$ we have that $$\lambda_i(x) = \lim_{n \to \infty}\frac{1}{n}\log\|D_xf^n(v)\|.$$
\end{enumerate}
Here we call the numbers $\{\lambda_i(x)\}_{i=1}^{p(x)}$ the Lyapunov exponents of $(f,\mu)$.
In the case that $\mu$ is  an $f$-invariant ergodic measure, the numbers $p(x)$, $\{d_i(x)\}$ and $\{\lambda_i(x)\}$ are constants almost everywhere. We denote them simply by $p$,  $\{d_i\}_{i=1}^p$ and $\{\lambda_i\}_{i=1}^p$.

A compact invariant subset $\Lambda\subset M$ is called a  \emph{ hyperbolic set}, if there exists a continuous splitting of the tangent bundle $T_\Lambda M = E^{s}\oplus E^{u}$, and constants $C > 0,\ 0 < \lambda < 1$ such that for every $x \in \Lambda$
 \begin{enumerate}
\item[(1)] $D_xf(E^s(x)) = E^s(f(x)),\ D_xf(E^u(x)) = E^u(f(x))$;
\item[(2)] for all $n \geq 0,\ \|D_xf^n(v)\|\leq C\lambda ^n\|v\|$ if $v \in E^s(x)$, and $\|D_xf^{-n}(v)\|\leq C\lambda^n\|v\|$ if $v \in E^u(x)$.
\end{enumerate} Given a point $x \in \Lambda$, for each small $\beta>0$,  the \emph{local stable and unstable manifolds} are defined as  follows:
\begin{eqnarray*}
&&W_\beta^s(f,x)=\Big\{y \in M : d(f^n(x),f^n(y)) \leq \beta,\ \forall n \geq 0\Big\},\\
&&W_\beta^u(f,x)=\Big\{y \in M : d(f^{-n}(x),f^{-n}(y)) \leq \beta,\ \forall n \geq 0\Big\}.
\end{eqnarray*}
The global stable and unstable sets of $x\in\Lambda$ are given as follows:
$$W^s(f,x)=\bigcup_{n\geq0}f^{-n}(W^s_\beta(f,f^n(x))),\, W^u(f,x)=\bigcup_{n\geq0}f^n(W^u_\beta(f,f^{-n}(x))).$$ A hyperbolic set is called \emph{locally maximal}, if there exists a neighbourhood $U$ of $\Lambda$ such that $\Lambda=\bigcap_{n\in\mathbb{Z}}f^n(U)$. Recall that a \emph{horseshoe} for a diffeomorphism $f$ is a transitive, locally maximal hyperbolic set that is totally disconnected and not finite.

Let $W^u$ and $W^s$ be the unstable and stable foliations of hyperbolic dynamical system $(f, \Lambda)$. For $x, y\in \Lambda$ with $x$ close to $y$, let $W_\beta^u(f, x)$ and $W_\beta^s(f, x)$ be the local stable foliations of $x$ and $y$. Define the map $h: W_\beta^s(f, x) \to W_\beta^s(f, y)$
sending $z$ to $h(z)$ by sliding along the leaves of $W^u$. The map $h$ is called the
holonomy map of $W^u$. The map $h$ is Lipschitz continuous if
$$d_y(h(z_1), h(z_2)) \leq L d_x(z_1, z_2),$$
where $z_1, z_2 \in W_\beta^s(f, x)$ and $d_x, d_y$ are natural path metrics on $W_\beta^s(f, x)$, $W_\beta^u(f, y)$ with respect to a fixed Riemannian structure on $M$. The constant $L$ is the Lipschitz constant, and it is independent of the choice of $W^s$. The map $h$ is $\alpha$-H$\ddot{o}$lder continuous if
$$d_y(h(z_1), h(z_2)) \leq H d_x(z_1, z_2)^\alpha,$$
where $H$ is the H\"{o}lder constant. Similarly we can define the holonomy map of $W^s$.

\subsubsection{Approximation of Lyapunov exponents and entropy}
For a $C^{1+\alpha}$ diffeomorphism $f:M\rightarrow M$,  Katok \cite{ka80} showed that an $f$-invariant ergodic hyperbolic measure (a measure has no zero Lyapunov exponents) with positive metric entropy can be approximated by horseshoes. However, Katok's result does not explicitly
mention a control of the Oseledets splitting over the horseshoes.  Recently, Avila {\it et al} \cite{acw} showed that there is a dominated splitting over the horseshoes, with approximately the same Lyapunov exponents on
each sub-bundle of the splitting.

Recall that $Df$-invariant splitting on a compact $f$-invariant subset $\Lambda$
\[
T_\Lambda M=E_1\oplus E_2\oplus \cdots \oplus E_{\ell},~~(\ell\ge 2)
\]
is a \emph{dominated splitting}, if there exists $N\ge 1$ such that for every $x\in\Lambda$, any unit vectors $v,w\in T_xM$:
\[
v\in E_i(x), w\in E_j(x)~~\text{with}~~i<j\Longrightarrow \|D_xf^N(v)\|\ge 2\|D_xf^N(w)\|.
\]
We write $E_1\succeq E_2 \succeq \cdots \succeq E_{\ell}$.
Furthermore, if there are numbers $\lambda_1>\lambda_2>\cdots>\lambda_{\ell}$, constants $C>0$ and $\displaystyle{0<\varepsilon< \min_{1\leq i < \ell} \frac{\lambda_{i}-\lambda_{i+1}}{100}}$ such that for every $x\in\Lambda$, $n\in\mathbb{N}$, $1\leq j \leq \ell$ and each unit vector $u\in E_j(x)$, it holds that
\[C^{-1} e^{n(\lambda_j-\varepsilon)} \leq \|D_xf^n(u)\| \leq C e^{n(\lambda_j+\varepsilon)}, \]
then we say that
\[
T_\Lambda M=E_1\oplus E_2\oplus \cdots \oplus E_{\ell},~~(\ell\ge 2)
\]
is a $\{\lambda_j\}_{1\leq j \leq \ell}-$\emph{dominated splitting}.

For the reader's convenience, we recall Avila, Crovisier and Wilkinson's approximation results in the following:

\begin{theorem}\label{horse-app} Let $f:M\to M$ be a $C^{1+\alpha}$ diffeomorphism,  and $\mu$ an $f$-invariant ergodic hyperbolic measure with $h_\mu(f)>0$. For each $\varepsilon>0$ and a weak-$*$ neighborhood $\mathcal{V}$ of $\mu$ in the space of $f$-invariant probability measures on $M$. Then there exists a compact set $\Lambda_\varepsilon^*\subset M$ and a positive integer $N$ such that the following properties hold:
\begin{enumerate}
\item[(1)] $\Lambda_\varepsilon^*$ is a locally maximal hyperbolic set and topologically mixing with respect
to $f^N$;
\item[(2)] $h_{\mu}(f)-\varepsilon<h_{top}(f,\Lambda_\varepsilon)<h_{\mu}(f)+\varepsilon$ where $\Lambda_\varepsilon=\Lambda_\varepsilon^* \cup f(\Lambda_\varepsilon^*) \cup \cdots f^{N-1}(\Lambda_\varepsilon^*)$;
\item[(3)] $\Lambda_\varepsilon$ is $\varepsilon$-close to the support of $\mu$ in the Hausdorff distance;
\item[(4)] each invariant probability measure supported on the horseshoe $\Lambda_\varepsilon$ lies in $\mathcal{V}$;
\item[(5)] if $\lambda_1>\lambda_2>\cdots>\lambda_{\ell}$ are the distinct Lyapunov exponents of $(f,\mu)$, with multiplicities $d_1, d_2,\cdots, d_{\ell}$, then there exists a $\{\lambda_j\}_{1\le j<\ell }-$dominated splitting $T_{\Lambda_\varepsilon} M=E_1\oplus E_2\oplus \cdots \oplus E_{\ell}$ with $\dim E_i=d_i$ for each $i$, and for each $x\in \Lambda_\varepsilon$, $k\geq 1$ and each vector $v\in E_i(x)$
     \[
     e^{(\lambda_i-\varepsilon)kN}\leq \|D_xf^{kN}(v)\|\leq e^{(\lambda_i+\varepsilon)kN},~~~\forall i=1,2,\cdots,\ell.
     \]
\end{enumerate}
\end{theorem}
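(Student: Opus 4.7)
The plan is to follow Katok's classical horseshoe construction and then upgrade it in the spirit of Avila--Crovisier--Wilkinson so that the Oseledec splitting of $\mu$ survives as a dominated splitting on the horseshoe with the prescribed Lyapunov data. First I would fix $\varepsilon>0$ and extract a Pesin (regular) block $R\subset M$ with $\mu(R)>1-\varepsilon/10$ on which (i) the Oseledec decomposition $T_xM=E_1(x)\oplus\cdots\oplus E_{\ell}(x)$ depends continuously on $x$, (ii) the angles between the subbundles are uniformly bounded below, and (iii) for every $x\in R$, every $n\ge 1$ and every unit vector $u\in E_i(x)$ one has a two-sided comparison $C^{-1}e^{n(\lambda_i-\varepsilon/8)}\le \|D_xf^n(u)\|\le C e^{n(\lambda_i+\varepsilon/8)}$ whenever the orbit segment spends most of its time in $R$. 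Brin--Katok and Shannon--McMillan--Breiman then produce, for all sufficiently large $n$, an $(n,\beta)$-separated set $E_n\subset R$ of cardinality $\ge e^{n(h_\mu(f)-\varepsilon/4)}$ whose orbits return to $R$ on a set of times of density close to one; here $\beta$ is chosen small in terms of the Pesin data.

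Next I would invoke Katok's Pesin closing lemma to replace each long orbit segment in $E_n$ by a nearby periodic orbit of period $\approx n$; the Pesin estimates transfer to a uniform hyperbolic estimate along these orbits, and the Oseledec splitting is shadowed by a dominated splitting along the periodic orbit. A standard Smale--Bowen rectangle/shadowing construction applied to a well-chosen finite family of these periodic orbits (those whose orbit segments stay in mutual neighborhoods and return to prescribed rectangles) then builds a locally maximal hyperbolic set $\Lambda^\ast_\varepsilon$ on which a suitable power $f^N$ is conjugate to a subshift of finite type; by enlarging the family one can arrange topological mixing for $f^N$ and $h_{\mathrm{top}}(f,\Lambda_\varepsilon)>h_\mu(f)-\varepsilon$, where $\Lambda_\varepsilon=\bigcup_{0\le k<N} f^k(\Lambda_\varepsilon^\ast)$. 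The upper bound $h_{\mathrm{top}}(f,\Lambda_\varepsilon)<h_\mu(f)+\varepsilon$ comes from the fact that the construction starts from orbits of $\mu$-typical points and a control on the number of admissible words. Closeness to $\mathrm{supp}\,\mu$ in the Hausdorff distance and the weak-$\ast$ inclusion in $\mathcal{V}$ are then automatic, since every invariant measure on $\Lambda_\varepsilon$ is a weak-$\ast$ limit of averages along the shadowing periodic orbits, which are close to $\mu$ in the weak-$\ast$ topology by equidistribution on the Pesin block.

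The key new step, and the main obstacle, is upgrading the mere hyperbolicity produced by Katok to the $\{\lambda_j\}_{1\le j<\ell}$-dominated splitting with multiplicities $d_i$. My plan is a cone-field argument: on each rectangle of the Markov construction I would define, for every $1\le i\le\ell$, an adapted cone $\cC_i(x)$ of the correct dimension $d_1+\cdots+d_i$ by transporting the Oseledec cones of the nearby Pesin point; the gap $\lambda_i-\lambda_{i+1}>100\varepsilon$ and the uniform estimates of Step~1 guarantee that $D_xf^N(\cC_i(x))\subset\mathrm{int}\,\cC_i(f^Nx)$ with strict contraction of the aperture and with the two-sided exponential bounds $e^{(\lambda_i-\varepsilon)kN}\le \|D_xf^{kN}|_{E_i}\|\le e^{(\lambda_i+\varepsilon)kN}$. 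Invariant subbundles $E_i$ of the prescribed dimensions are then obtained by intersecting forward iterates of $\cC_i$ and backward iterates of the complementary cones, and $E_1\succeq\cdots\succeq E_\ell$ follows from the uniform gap between consecutive $\lambda_i$. The delicate point is ensuring that the cone field can be defined \emph{globally} on $\Lambda_\varepsilon$ and is genuinely invariant under $Df^N$: this requires that every orbit segment of length $N$ inside the horseshoe shadow an orbit segment of a Pesin point, which in turn is precisely the reason one must take $N$ large and perform the construction inside a sufficiently saturated Pesin block. Everything else (topological mixing, Hausdorff closeness, weak-$\ast$ convergence of invariant measures) then follows from the standard specification/shadowing package on locally maximal hyperbolic sets.
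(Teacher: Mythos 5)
This theorem is not proved in the paper: the authors explicitly introduce it with ``For the reader's convenience, we recall Avila, Crovisier and Wilkinson's approximation results in the following,'' and cite it as Theorem from \cite{acw}. The only original content in the paper surrounding this statement is the short remark immediately afterwards, which observes that the upper bound $h_{\mathrm{top}}(f,\Lambda_\varepsilon)<h_\mu(f)+\varepsilon$ in item (2) is not in the original \cite{acw} statement but follows from a slight modification of their construction (essentially because the construction starts from $(n,\beta)$-separated sets in a Pesin block whose cardinality is also bounded above by $e^{n(h_\mu(f)+\varepsilon)}$). So there is no ``paper's own proof'' to compare your attempt against.

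That said, as a high-level account of how \cite{acw} proves this result, your outline is a reasonable one: Pesin block extraction, Brin--Katok/SMB to produce large $(n,\beta)$-separated sets with controlled return statistics, Katok's closing lemma and a Smale--Bowen rectangle construction to obtain a locally maximal hyperbolic set conjugate to a mixing subshift of finite type for a power $f^N$, and then a cone-field argument exploiting the spectral gaps $\lambda_i-\lambda_{i+1}$ and the uniform Pesin estimates to promote the Oseledec splitting to a genuine $\{\lambda_j\}$-dominated splitting of the prescribed dimensions on $\Lambda_\varepsilon$. You also correctly identify the delicate point (making the cone field globally defined and $Df^N$-invariant across the whole horseshoe, which forces $N$ large and a well-saturated Pesin block). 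Since the paper defers entirely to \cite{acw} for the proof, the appropriate assessment is simply that your sketch is consistent with the known argument, not that it reproduces or diverges from a proof given here. If you wanted to turn this into a self-contained proof you would need to fill in the quantitative closing-lemma estimates and the precise cone-contraction inequalities, and you should make explicit the minor modification (noted in the paper's remark) that yields the two-sided entropy bound in item (2).
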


\begin{remark} In the second statement, the original result does not give the inequality of the right hand side. However, only a slightly modification can give the upper bound of the topological entropy of $f$ on the horseshoe.
\end{remark}

\subsubsection{Statements of results}
Let $f: M\to M$ be a $C^{1+\alpha}$ diffeomorphism of a compact Riemannian manifold $M$, and let $\mu$ be a hyperbolic ergodic $f$-invariant probability measure with positive  entropy. Suppose that $(f,\mu)$ has only two Lyapunov exponents $\lambda_u(\mu)>0>\lambda_s(\mu)$. Ledrappier, Young \cite{ly1} and Barreira, Pesin, Schmeling \cite{bps} proved that
\begin{equation}\label{LY1+}
 \mathrm{Dim}\mu = \frac{h_\mu(f)}{\lambda_u(\mu)} - \frac{h_\mu(f)}{\lambda_s(\mu)}
\end{equation}
where $\mathrm{ Dim}$ denotes either $\dim_H$ or $\underline{\dim}_B$ or $\overline{\dim}_B$.
Our strategy used to prove the dimension approximation in this setting is as follows.
It follows from Theorem \ref{horse-app}  that $\mu$ can be approximated by a sequence of horseshoes $\{\Lambda_\varepsilon\}_{\varepsilon>0}$.
Using well-established properties of dimension theory in uniform hyperbolic systems, one can show that
\[ \mathrm{Dim} (\Lambda_\varepsilon\cap W^i_\beta(f,x)) \approx \frac{h_\mu(f)}{|\lambda_i(\mu)|}\]
for $i=u, s$ and every $x\in\Lambda$.
Burns and Wilkinson \cite{bw2005} proved the holonomy maps of the stable and unstable foliations for $(f, \Lambda_\varepsilon)$ are Lipschitz continuous.
Consequently, one can show that
\begin{eqnarray*}
\begin{aligned}
     &\dim_H(\Lambda_\varepsilon \cap W^u_\beta(f, x)) + \dim_H(\Lambda_\varepsilon \cap W^s_\beta(f, x))\\
\leq &\mathrm{Dim}\Lambda_\varepsilon\\
\leq &\overline{\dim}_B(\Lambda_\varepsilon \cap W^u_\beta(f, x)) + \overline{\dim}_B(\Lambda_\varepsilon \cap W^s_\beta(f, x))
\end{aligned}
\end{eqnarray*} for every $x\in\Lambda_\varepsilon$. Hence, $\mathrm{Dim}\mu$ is approximately equal to $\mathrm{Dim}\Lambda_\varepsilon$. The detailed proofs will be given in the next section.

\begin{maintheorem}\label{B}
Let $f:M\to M$ be a $C^{1+\alpha}$ diffeomorphism, and $\mu$ be an $f$-invariant ergodic hyperbolic measure with $h_\mu(f)>0$. Assume that $(f,\mu)$ has only two Lyapunov exponents $\lambda_u(\mu)>0>\lambda_s(\mu)$. For each $\varepsilon>0$, there exists a horseshoe $\Lambda_\varepsilon$ such that
\[
   |\mathrm{ Dim} \Lambda_\varepsilon- \mathrm{ Dim} \mu|<\varepsilon
\]
where $\mathrm{ Dim}$ denotes either $\dim_H$ or $\underline{\dim}_B$ or $\overline{\dim}_B$.
\end{maintheorem}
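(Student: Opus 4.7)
The plan is to combine the Avila--Crovisier--Wilkinson approximation result (Theorem \ref{horse-app}) with the Ledrappier--Young formula \eqref{LY1+} and classical dimension theory for uniformly hyperbolic sets, exploiting the fact that having only two distinct Lyapunov exponents makes the action on the horseshoe $\Lambda_\varepsilon$ asymptotically conformal on each of the subbundles $E^u$ and $E^s$.

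First, given $\varepsilon>0$ I would choose a small auxiliary parameter $\eta>0$ (to be fixed at the end) and apply Theorem \ref{horse-app} to produce a horseshoe $\Lambda_\eta$ and an integer $N$ such that $|h_{\mathrm{top}}(f,\Lambda_\eta)-h_\mu(f)|<\eta$, such that $\Lambda_\eta$ carries a $\{\lambda_u,\lambda_s\}$-dominated splitting $T_{\Lambda_\eta}M=E^u\oplus E^s$, and such that every unit vector $v\in E^i(x)$ satisfies the pinched bounds
\[
e^{(\lambda_i(\mu)-\eta)kN}\le\|D_xf^{kN}(v)\|\le e^{(\lambda_i(\mu)+\eta)kN},\qquad i\in\{u,s\}.
\]
These pinched bounds mean that $f^N$ acts on each $E^i$ with expansion/contraction rates that are the same up to a factor $e^{2\eta N}$, so that the induced dynamics on stable and unstable slices is conformal up to an error that vanishes with $\eta$.

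Second, on any local unstable leaf $W^u_\beta(f,x)$ with $x\in\Lambda_\eta$, I would use the Moran-cover / Bowen-equation approach (in the version valid for conformal repellers, see McCluskey--Manning, Pesin--Weiss, or Barreira) to show that $\dim_H(\Lambda_\eta\cap W^u_\beta(f,x))=\underline{\dim}_B(\Lambda_\eta\cap W^u_\beta(f,x))=\overline{\dim}_B(\Lambda_\eta\cap W^u_\beta(f,x))=t^u_\eta$, where $t^u_\eta$ is the unique root of the Bowen equation $P_{f^N|_{\Lambda_\eta}}(-t\log\|Df^N|_{E^u}\|)=0$. The pinched bounds together with the variational principle immediately give
\[
\Bigl|t^u_\eta-\frac{h_\mu(f)}{\lambda_u(\mu)}\Bigr|=O(\eta),
\]
and the same argument on the stable side yields $|t^s_\eta+h_\mu(f)/\lambda_s(\mu)|=O(\eta)$ together with equality of the three dimensions on the stable slice.

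Third, I would invoke the Lipschitz continuity of stable and unstable holonomies on uniformly hyperbolic sets (Burns--Wilkinson \cite{bw2005}) together with Corollary \ref{lipdimension} to obtain, for any $x\in\Lambda_\eta$ and $\mathrm{Dim}\in\{\dim_H,\underline{\dim}_B,\overline{\dim}_B\}$, the sandwich
\begin{eqnarray*}
&&\dim_H(\Lambda_\eta\cap W^u_\beta(f,x))+\dim_H(\Lambda_\eta\cap W^s_\beta(f,x))\\
&&\qquad\le \mathrm{Dim}\,\Lambda_\eta\\
&&\qquad\le \overline{\dim}_B(\Lambda_\eta\cap W^u_\beta(f,x))+\overline{\dim}_B(\Lambda_\eta\cap W^s_\beta(f,x)).
\end{eqnarray*}
The outer terms both equal $t^u_\eta+t^s_\eta$ by the previous step, so $\mathrm{Dim}\,\Lambda_\eta=t^u_\eta+t^s_\eta$, and comparing with \eqref{LY1+} gives $|\mathrm{Dim}\,\Lambda_\eta-\mathrm{Dim}\,\mu|\le C\eta$. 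Choosing $\eta$ so that $C\eta<\varepsilon$ and setting $\Lambda_\varepsilon:=\Lambda_\eta$ finishes the proof.

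The main obstacle I anticipate is making the asymptotic-conformality step rigorous when $\dim E^u$ or $\dim E^s$ exceeds $1$: one needs the pinched bounds from Theorem \ref{horse-app} to imply that the Moran covers by Bowen balls used in the Bowen-equation argument have diameters comparable up to a factor $e^{O(\eta n)}$, so that the usual equality of Hausdorff and box dimensions on a conformal repeller survives with an $O(\eta)$ error in the exponent. A secondary technical point is controlling the upper sandwich by a \emph{single} pair of slices rather than a supremum over $x\in\Lambda_\eta$; this is where uniformity of the Lipschitz constants of the holonomies on the compact horseshoe is essential.
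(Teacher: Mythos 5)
Your proposal follows essentially the same route as the paper: apply the Avila--Crovisier--Wilkinson approximation, estimate the dimension of the stable and unstable slices by a Bowen-type equation, then use Lipschitz holonomies (Burns--Wilkinson) to pass to the full horseshoe via the local product structure, and finally compare with the Ledrappier--Young formula. The one place your write-up is imprecise, and which you yourself flag at the end, is the claim that on a slice one gets a \emph{single} Bowen root $t^u_\eta$ with $\dim_H=\underline{\dim}_B=\overline{\dim}_B=t^u_\eta$. That equality holds for genuinely conformal slices but not when $\dim E^u>1$. The paper sidesteps this by not claiming equality: it works with \emph{two} Bowen-type equations, one built from $\|D_xF|_{E^u}\|$ and one from the minimum norm $m(D_xF|_{E^u})$ (Barreira's non-additive thermodynamic formalism), obtaining roots $\underline t_u\le\overline t_u$ that sandwich $\dim_H$, $\underline{\dim}_B$, and $\overline{\dim}_B$ of each slice; the pinching in Theorem~\ref{horse-app}(5) then forces $\overline t_u-\underline t_u=O(\varepsilon)$. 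The paper additionally iterates over $F=f^{2^k}$ and passes to the limit $k\to\infty$ (using arguments from \cite{bch}), identifying the limiting roots as solutions of super-additive and sub-additive pressure equations; this sharpens the estimates but is not logically indispensable for an $O(\varepsilon)$ conclusion. So your outline is sound, and your acknowledged ``main obstacle'' is exactly the point the paper handles by replacing approximate equality with a two-sided sandwich via the max- and min-norm potentials.
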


 In \cite{wcz},  the authors relaxed the smoothness of Theorem \ref{horse-app} to $C^1$ under the additional condition that Oseledec's splitting $E^u\oplus E^s$ of $(f,\mu)$ is dominated. In this setting, one do not have Lipschitz continuity of the holonomy map in general. However, using Palis and Viana's method \cite{PV88} one can show that: for every $\gamma\in(0, 1)$, there is some $D_\gamma>0$ such that the holonomy maps of the stable and unstable foliations for the hyperbolic dynamical system $(f, \Lambda_\varepsilon)$ (See Lemma \ref{holderfoliation}) are $(D_\gamma, \gamma)$-H\"{o}lder continuous. Since $\gamma$ is arbitrary, using the ideas in  \cite{wwcz} one can prove the following theorem:

\begin{maintheorem}\label{C}
Let $f:M\to M$ be a $C^{1}$ diffeomorphism, and let $\mu$ be an $f$-invariant ergodic hyperbolic measure with $h_\mu(f)>0$. Assume that $(f,\mu)$ has only two Lyapunov exponents $\lambda_u(\mu)>0>\lambda_s(\mu)$ and the corresponding Oseledec's splitting $E^u\oplus E^s$ is dominated. For each $\varepsilon>0$, there exists a horseshoe $\Lambda_\varepsilon$ such that
\[
   |\mathrm{ Dim} \Lambda_\varepsilon- \mathrm{ Dim} \mu|<\varepsilon
\]
where $\mathrm{ Dim}$ denotes either $\dim_H$ or $\underline{\dim}_B$ or $\overline{\dim}_B$.
\end{maintheorem}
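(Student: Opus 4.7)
The plan is to adapt the scheme outlined for Theorem \ref{B} to the $C^1$ setting, replacing Burns--Wilkinson's Lipschitz holonomy with H\"older holonomy whose exponent can be chosen arbitrarily close to one, and to absorb the resulting distortion into the approximation error. The dominated splitting hypothesis is used twice: first to invoke the $C^1$ version of Katok's approximation from \cite{wcz}, and second to run the Palis--Viana argument on the horseshoes produced by that theorem.

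First I would invoke the $C^1$ analogue of Theorem \ref{horse-app} from \cite{wcz}: for each small $\eta>0$ it supplies a horseshoe $\Lambda_\eta$ approximating $\mu$ in the weak-$\ast$ topology and in Hausdorff distance, with topological entropy within $\eta$ of $h_\mu(f)$ and a dominated splitting $T_{\Lambda_\eta}M=E^u\oplus E^s$ whose sub-bundles realize the Lyapunov exponents $\lambda_u(\mu)$ and $\lambda_s(\mu)$ uniformly up to $\eta$. On such a uniformly hyperbolic set, the two-exponent hypothesis reduces the problem to the conformal-on-each-bundle setting, and Bowen's equation applied to the stable and unstable geometric potentials combined with the uniform exponent bounds yields
\begin{equation*}
\left|\mathrm{Dim}(\Lambda_\eta\cap W^u_\beta(f,x))-\frac{h_\mu(f)}{\lambda_u(\mu)}\right|=O(\eta),\qquad \left|\mathrm{Dim}(\Lambda_\eta\cap W^s_\beta(f,x))-\frac{h_\mu(f)}{|\lambda_s(\mu)|}\right|=O(\eta),
\end{equation*}
where on each slice the Hausdorff and both box dimensions coincide and the estimates are uniform in $x\in\Lambda_\eta$.

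Next I would pass from slice dimensions to $\mathrm{Dim}\,\Lambda_\eta$. By the Palis--Viana argument \cite{PV88}, for every $\gamma\in(0,1)$ there is $D_\gamma>0$ such that the stable and unstable holonomy maps of $(f,\Lambda_\eta)$ are $(D_\gamma,\gamma)$-H\"older. Combining this with Lemma \ref{holderdimension}, the Markov coding of the horseshoe $\Lambda_\eta$, and the product-structure argument carried out in \cite{wwcz}, one obtains two-sided inequalities of the form
\begin{equation*}
\gamma\bigl(\dim_H(\Lambda_\eta\cap W^u_\beta(f,x))+\dim_H(\Lambda_\eta\cap W^s_\beta(f,x))\bigr)\le \dim_H\Lambda_\eta,
\end{equation*}
\begin{equation*}
\overline{\dim}_B\Lambda_\eta\le \gamma^{-1}\bigl(\overline{\dim}_B(\Lambda_\eta\cap W^u_\beta(f,x))+\overline{\dim}_B(\Lambda_\eta\cap W^s_\beta(f,x))\bigr),
\end{equation*}
and the analogous sandwich for $\underline{\dim}_B\Lambda_\eta$. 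Sending first $\gamma\to 1$ and then $\eta\to 0$, and invoking \eqref{LY1+}, gives $|\mathrm{Dim}\,\Lambda_\eta-\mathrm{Dim}\,\mu|=O(\eta)$; choosing $\eta$ small in terms of $\varepsilon$ and setting $\Lambda_\varepsilon:=\Lambda_\eta$ concludes the argument.

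The main obstacle is precisely the loss of Lipschitz holonomy. In the $C^{1+\alpha}$ case the sharp identity $\mathrm{Dim}\,\Lambda_\eta=\sum_i \mathrm{Dim}(\Lambda_\eta\cap W^i_\beta)$ arises in one stroke, whereas here it degenerates into a pair of inequalities carrying $\gamma$ and $\gamma^{-1}$ distortions, forcing the argument to juggle two independent limits that must be controlled simultaneously. Verifying that the Palis--Viana constants $D_\gamma$ do not pollute the $\eta$-dependent slice dimension estimates, and that the dominated splitting survives uniformly on $\Lambda_\eta$ as $\eta\to 0$, is the technical crux, and it is precisely here that the domination hypothesis on the Oseledec splitting of $\mu$ is indispensable.
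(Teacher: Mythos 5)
Your proposal follows essentially the same route as the paper's proof: invoke the $C^1$ Katok-type approximation from \cite{wcz} to obtain horseshoes $\Lambda_\varepsilon$ with controlled entropy and a dominated splitting realizing the exponents, estimate the slice dimensions via Bowen-type equations as in \cite{wwcz} and \cite{bch}, establish $(D_\gamma,\gamma)$-H\"older holonomy for all $\gamma\in(0,1)$ by the Palis--Viana perturbation argument, and exploit the local product structure together with Lemma \ref{holderdimension} to squeeze $\mathrm{Dim}\,\Lambda_\varepsilon$ between $\gamma$- and $\gamma^{-1}$-distorted sums of the slice dimensions before letting $\gamma\to 1$ and $\varepsilon\to 0$. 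One small inaccuracy: you invoke \eqref{LY1+} for the identity $\mathrm{Dim}\,\mu=h_\mu/\lambda_u-h_\mu/\lambda_s$, but that statement is a $C^{1+\alpha}$ result of Ledrappier--Young and Barreira--Pesin--Schmeling; in the $C^1$ dominated setting one needs Wang--Cao \cite[Corollary 1]{wc2016} instead, which is what the paper cites. Also, your remark that "on each slice the Hausdorff and both box dimensions coincide" is not claimed (nor needed) -- the paper only sandwiches all three slice dimensions between $\underline{t}_u$ and $\overline{t}_u$, which suffices since both endpoints converge to $h_\mu(f)/\lambda_u(\mu)$ as $\varepsilon\to 0$.
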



 \section{Proofs}\label{proof}
 In this section, we provide the proof of the main results presented in the previous section.

\subsection{Proof of Theorem \ref{A}}Given an $f$-invariant ergodic measure $\mu$, let $P(t):=P_\mu(f|_\Lambda, \Phi_f(t))$, it is easy to see that the function $t\mapsto P(t)$ is continuous and strictly decreasing on the interval $[0,m_0]$. It follows from \eqref{metric-ent-form} that $P(0)=h_\mu(f)\ge 0$, and $P(m_0)\le 0$ by Margulis-Ruelle's inequality. Consequently,  there exists a unique root $s_\mu$  of the equation $P_\mu(f|_\Lambda, \Phi_f(t))=0$.

 If $h_\mu(f)=0$, it is easy to see that $h_\mu(f)=s_\mu=0$. Hence, $\dim_L\mu=s_\mu$.

 If $0<h_\mu(f)< \lambda_{m_0}(\mu)$, then $P(0)>0$ and $P(1)<0$. This implies that $s_\mu\in (0,1)$ and $0=P(s_\mu)=h_\mu(f)-s_\mu \lambda_{m_0}(\mu)$.  In consequence, we have that $$s_\mu=\dim_L\mu=\frac{h_\mu(f)}{\lambda_{m_0}(\mu)}.$$

If $h_\mu(f)\ge \lambda_{m_0}(\mu)$, note that
$$
\begin{aligned}
0&=h_\mu(f)+\mathcal{L}_*(\Phi_f(s_\mu),\mu)\\
&=h_\mu(f)-\sum_{i=m_0-[s_\mu]+1}^{m_0}\lambda_i(\mu)-(s_\mu-[s_\mu])\lambda_{m_0-[s_\mu]}(\mu).
\end{aligned}
$$
Hence,
$$
s_\mu=[s_\mu]+\frac{h_\mu(f)-\sum_{i=m_0-[s_\mu]+1}^{m_0}\lambda_i(\mu)}{\lambda_{m_0-[s_\mu]}(\mu)}.
$$
On the other hand, since $t\mapsto P(t)$ is  strictly decreasing in $t$,  we have that
\[
[s_\mu]=\max\{i:  \lambda_{m_0}(\mu)+\cdots+\lambda_{m_0-i +1}(\mu)\le h_{\mu}(f)\}.
\]
This yields that
\[
s_\mu=\dim_{\mathrm{L}}\mu.
\]


To prove the second statement, by Theorem 5.1 in \cite{cpz}, for each $f$-invariant ergodic measure $\mu$ with positive entropy, and for each $\varepsilon>0$ there exists an $f$-invariant compact subset $\Lambda_\varepsilon\subset\Lambda$ such that the following statements hold:
\begin{enumerate}
\item[(i)] $h_{\text{top}}(f|_{\Lambda_\varepsilon})\ge h_{\mu}(f)-\varepsilon$;
\item[(ii)] there is a continuous invariant splitting
$T_x M=E_1(x)\oplus E_2(x)\oplus\cdots\oplus E_\ell(x)$ over
$\Lambda_\varepsilon$ and a constant $C>0$ so that
\[
C^{-1}\exp(n(\lambda_j(\mu)-\varepsilon))\le \|D_xf^n(u)\| \le C\exp(n(\lambda_j(\mu)+\varepsilon))
\]for any unit vector $u\in E_j(x)$,
where $\lambda_1(\mu)<\dots<\lambda_\ell(\mu)$ are distinct Lyapunov exponents of $f$ with respect to the measure $\mu$.
\end{enumerate}
By  modifying the arguments in \cite[Theorem 5.1]{cpz},  one may improve the estimate in (i)  as follows:
\begin{enumerate}
\item[(i)$^{\prime}$] $h_{\mu}(f)+\varepsilon\ge h_{\text{top}}(f|_{\Lambda_\varepsilon})\ge h_{\mu}(f)-\varepsilon$.
\end{enumerate}
Since $\Lambda_{\varepsilon}$ is a repeller of $f$, one can choose an $f$-invariant ergodic measure $\mu_\varepsilon$ on $\Lambda_\varepsilon$ so that $h_{\mu_\varepsilon}(f)=h_{\text{top}}(f|_{\Lambda_\varepsilon})$, it yields that
\[
\begin{aligned}
P_{\mathrm{top}}(f|_{\Lambda_\varepsilon}, \Phi_f(t))&\ge h_{\mu_\varepsilon}(f)+\mathcal{L}_*(\Phi_f(t), \mu_\varepsilon)\\
&\ge h_{\mu}(f)+\mathcal{L}_*(\Phi_f(t), \mu)-(t+1)\varepsilon\qquad (~\text{by (ii)}~)\\
&\ge P_{\mu}(f|_{\Lambda}, \Phi_f(t))-(m_0+1)\varepsilon.
\end{aligned}
\]
On the other hand, since $f$ is expanding, by the variational principle there exists an $f$-invariant ergodic measure $\widetilde{\mu}_\varepsilon$ on $\Lambda_\varepsilon$ so that
\[
\begin{aligned}
P_{\mathrm{top}}(f|_{\Lambda_\varepsilon}, \Phi_f(t))&=h_{\widetilde{\mu}_\varepsilon}(f)+\mathcal{L}_*(\Phi_f(t), \widetilde{\mu}_\varepsilon)\\
&\le h_\mu(f)+\mathcal{L}_*(\Phi_f(t), \mu)+(t+1)\varepsilon\\
&\le  P_{\mu}(f|_{\Lambda}, \Phi_f(t))+(m_0+1)\varepsilon.
\end{aligned}
\]
Hence,
\[
\Big|P_{\mathrm{top}}(f|_{\Lambda_\varepsilon}, \Phi_f(t))-P_{\mu}(f|_{\Lambda}, \Phi_f(t))\Big|\le(m_0+1)\varepsilon.
\]
By Theorem 4.1 in \cite{cpz}, the Carath\'{e}odory singular dimension $\dim_C \Lambda_\varepsilon$ of $\Lambda_\varepsilon$ is given by the unique root of the following equation
\[
P_{\mathrm{top}}(f|_{\Lambda_\varepsilon}, \Phi_f(t))=0.
\]
This together with the first statement yield that
\[
\begin{aligned}
K|\dim_C \Lambda_\varepsilon-\dim_\mathrm{L}\mu|&\le \Big|P_{\mu}(f|_{\Lambda}, \Phi_f(\dim_C \Lambda_\varepsilon))-P_{\mu}(f|_{\Lambda}, \Phi_f(\dim_\mathrm{L}\mu))\Big |\\
&=\Big|P_{\mu}(f|_{\Lambda}, \Phi_f(\dim_C \Lambda_\varepsilon))-P_{\mathrm{top}}(f|_{\Lambda_\varepsilon}, \Phi_f(\dim_C \Lambda_\varepsilon))\Big |\\
&\le(m_0+1)\varepsilon
\end{aligned}
\]
where $K=\min_{x\in \Lambda}m(D_xf)$ and $m(\cdot)$ denotes the minimum norm of an operator. Consequently, we have that $\dim_C \Lambda_\varepsilon\rightarrow \dim_\mathrm{L}\mu$ as $\varepsilon$ approaching zero.

\subsection{Proof of Proposition \ref{dimequal}}
 Given a subset $Z\subset \Lambda$, since $P_Z(f|_{\Lambda},\Phi_f(t))$ is continuous and strictly decreasing in $t$, let $t_Z$ denote the unique root of the equation $P_Z(f|_{\Lambda},\Phi_f(t))=0$. For every $t<t_Z$, we have that $P_Z(f|_{\Lambda},\Phi_f(t))>0$. Fix such a number $t$, and take $\beta>0$ so that $P_Z(f|_{\Lambda},\Phi_f(t))-\beta>0$. Since $$\displaystyle{P_Z(f|_{\Lambda},\Phi_f(t))=\liminf_{r\to 0} P_Z(f|_{\Lambda},\Phi_f(t),r)}$$there exists $r_0>0$ such that for each $0<r<r_0$ one has
\[
P_Z(f|_{\Lambda},\Phi_f(t),r)>P_Z(f|_{\Lambda},\Phi_f(t))-\beta.
\]
Fix such a small $r>0$. By the definition of topological pressure on  arbitrary subsets, one has
\[
m(Z,\Phi_f(t),P_Z(f|_{\Lambda},\Phi_f(t))-\beta,r)=+\infty.
\]
Hence, for each $\xi>0$, there exists $L\in \mathbb{N}$ so that for any $N>L$ we have that
\[
\begin{aligned}
&\exp(-N(P_Z(f|_{\Lambda},\Phi_f(t))-\beta))\inf \Big\{ \sum_{i}\exp (\sup_{y\in B_{n_i}(x_i,r)}-\varphi^t(y, f^{n_i}))  \Big\}
\\
&\ge\inf\Big\{ \sum_{i}\exp (-(P_Z(f|_{\Lambda},\Phi_f(t))-\beta)n_i +\sup_{y\in B_{n_i}(x_i,r)}-\varphi^t(y, f^{n_i}))  \Big\}>\xi
\end{aligned}
\]
where the infimum is taken over all collections $\{B_{n_i}(x_i,r)\}$ of Bowen's balls with $n_i\ge N$, which covers $Z$. This yields that
\[
\inf \Big\{ \sum_{i}\exp (\sup_{y\in B_{n_i}(x_i,r)}-\varphi^t(y, f^{n_i}))  \Big\}>\xi \exp(N(P_Z(f|_{\Lambda},\Phi_f(t))-\beta)).
\]
Letting $N\to\infty$, we have that
\[
m(Z,t, r)=+\infty.
\]
Hence,
\begin{eqnarray*}
\dim_{C,r}Z\ge t
\end{eqnarray*}
for all $0<r<r_0$. Consequently, since $t<t_Z$ is arbitrary, we have that
\begin{eqnarray}\label{LB}
\dim_{C}Z\ge t_Z.
\end{eqnarray}

On the other hand, for each $t>t_Z$ one has that $P_Z(f|_{\Lambda},\Phi_f(t))<0$. Fix such a number $t$, and take $\widetilde{\beta}>0$ so that $P_Z(f|_{\Lambda},\Phi_f(t))+\widetilde{\beta}<0$.  By the definition of topological pressure on arbitrary subsets, for any $R>0$, there exists $0<r<R$ such that
\[
P_Z(f|_{\Lambda},\Phi_f(t),r)<P_Z(f|_{\Lambda},\Phi_f(t))+\widetilde{\beta}.
\]
For such a small $r>0$ one has
\[
m(Z,\Phi_f(t),P_Z(f|_{\Lambda},\Phi_f(t))+\widetilde{\beta},r)=0
\]
Hence, for each small $\widetilde{\xi}>0$ there exists $\widetilde{L}\in \mathbb{N}$ so that for any $N>\widetilde{L}$ we have that
\[
\begin{aligned}
&\exp(-N(P_Z(f|_{\Lambda},\Phi_f(t))+\widetilde{\beta}))\inf \Big\{ \sum_{i}\exp (\sup_{y\in B_{n_i}(x_i,r)}-\varphi^t(y, f^{n_i}))  \Big\}
\\
&\le\inf\Big\{ \sum_{i}\exp (-(P_Z(f|_{\Lambda},\Phi_f(t))+\widetilde{\beta})n_i +\sup_{y\in B_{n_i}(x_i,r)}-\varphi^t(y, f^{n_i}))  \Big\}\le \widetilde{\xi}
\end{aligned}
\]
where the infimum is taken over all collections $\{B_{n_i}(x_i,r)\}$ of Bowen's balls with $n_i\ge N$, which covers $Z$. This yields that
\[
\inf \Big\{ \sum_{i}\exp (\sup_{y\in B_{n_i}(x_i,r)}-\varphi^t(y, f^{n_i}))  \Big\}\le \widetilde{\xi} \exp(N(P_Z(f|_{\Lambda},\Phi_f(t))+\widetilde{\beta}))
\]
 Letting $N\to\infty$, one has
\[
m(Z, t,r)=0.
\]
Consequently, for such $r>0$ one has
\[
\dim_{C, r} Z\le t.
\]
Hence, we have that
\begin{eqnarray}\label{UB}
\dim_{C} Z=\liminf_{r\to 0}\dim_{C, r} Z\le t_Z.
\end{eqnarray}
It follows from \eqref{LB} and \eqref{UB} that $$\dim_CZ=t_Z.$$

To show the second statement, for a given $f$-invariant ergodic measure $\mu$ supported on $\Lambda$, and a subset $Z\subset \Lambda$ with $\mu(Z)=1$, we have that
\[
P_Z(f|_{\Lambda}, \Phi_f(t))\ge P_\mu(f|_{\Lambda}, \Phi_f(t)).
\]
By (1) of Theorem \ref{A} and the first statement, one has
\[
\dim_C Z\ge \dim_L\mu.
\]
By the definition of Carath\'{e}odory singular dimension of arbitrary subsets, one has
\[
\dim_{C,r} Z\ge \dim_L\mu
\]
for all sufficiently small $r>0$. Consequently, we have that
\[
\dim_C\mu=\liminf_{r\to 0}\dim_{C,r} \mu =\liminf_{r\to 0}\inf\{\dim_{C,r} Z: \mu(Z)=1 \}\ge \dim_L\mu.
\]
To prove that $\dim_C\mu=\dim_L\mu$, we assume that $\dim_C\mu>\widetilde{t}>\dim_L\mu$. By the first statement in Theorem \ref{A}, we have that
\[
P_\mu(f|_{\Lambda},\Phi(\widetilde{t}))<0.
\]
By the definition of measure theoretic pressure, for each $n\in \mathbb{N}$, there exists $0<r_n<\frac 1n$ so that
\[
\inf\{P_Z(f|_{\Lambda},\Phi(\widetilde{t}),r_n): \mu(Z)=1\}<0.
\]
Hence, there exists a subset $Z_n\subset \Lambda$ with $\mu(Z_n)=1$ so that
\[
P_{Z_n}(f|_{\Lambda},\Phi(\widetilde{t}),r_n)<0.
\]
Put $\widetilde{Z}:=\bigcap_{n\ge 1} Z_n$, then $\mu(\widetilde{Z})=1$  and
\[
\begin{aligned}
P_{\widetilde{Z}}(f|_{\Lambda},\Phi(\widetilde{t}))&=\liminf_{r\to 0}P_{\widetilde{Z}}(f|_{\Lambda},\Phi(\widetilde{t}),r)\\
&\le \liminf_{n\to \infty}P_{Z_n}(f|_{\Lambda},\Phi(\widetilde{t}),r_n)\le 0
\end{aligned}
\]
It follows from the first statement and the definition of Carath\'{e}odory singular dimension of $\mu$ that
\[
\dim_C\mu=\liminf_{r\to 0} \dim_{C,r}\mu \le \liminf_{r\to 0} \dim_{C,r} \widetilde{Z}=\dim_C\widetilde{Z}\le \widetilde{t},
\]
which yields a contraction.  Hence, we have that $\dim_C\mu=\dim_L\mu$.

\subsection{Proof of Theorem \ref{B}}
Ledrappier, Young \cite{ly1} and Barreira, Pesin, Schmeling \cite{bps} proved that
\[ \mathrm{Dim}\mu = \frac{h_\mu(f)}{\lambda_u(\mu)} - \frac{h_\mu(f)}{\lambda_s(\mu)}\]
where $\mathrm{ Dim}$ denotes either $\dim_H$ or $\underline{\dim}_B$ or $\overline{\dim}_B$.
Fix a small number $\varepsilon>0$. By Theorem \ref{horse-app}, there exists a horseshoe $\Lambda_\varepsilon$ such that
\begin{enumerate}
\item[(i)] $|h_{\mathrm{top}}(f,\Lambda_\varepsilon) - h_{\mu}(f)|<\varepsilon$;
\item[(ii)] there exists a dominated splitting $T_{\Lambda_\varepsilon} M=E^u\oplus E^s$ with $\dim E^i=d_i\, (i=u,s)$, and for each $x\in \Lambda_\varepsilon$, every $n\geq 1$ and each vector $v\in E^i(x)$ $(i=s,u)$
     \[
     e^{(\lambda_i(\mu)-\varepsilon)n}< \|D_xf^{n}(v)\|< e^{(\lambda_i(\mu)+\varepsilon)n}.
     \]
\end{enumerate}
 Fixed any $k\in\mathbb{N}$, denote $F=f^{2^k}$. Since $\Lambda_\varepsilon$ is a locally maximal hyperbolic set for $f$, $\Lambda_\varepsilon$ is also a locally maximal hyperbolic set for $F$. Notice that
\[ W^u_\beta(F, x)\cap\Lambda_\varepsilon = W^u_\beta(f, x)\cap\Lambda_\varepsilon \quad \text{and} \quad W^s_\beta(F, x)\cap\Lambda_\varepsilon = W^s_\beta(f, x)\cap\Lambda_\varepsilon.\]
Let $\|\cdot\|$ and $m(\cdot)$ denote the maximal and minimal norm of an operator.  For every $x\in\Lambda_\varepsilon$, Barreira \cite{ba96} proved that
\begin{eqnarray*}
\underline{t}_u^k \leq \dim_H(\Lambda_\varepsilon\cap W^u_\beta(f,x))
                  \leq \overline{\dim}_B(\Lambda_\varepsilon\cap W^u_\beta(f,x))
                  \leq \overline{t}_u^k
\end{eqnarray*}
where $\underline{t}_u^k$, $\overline{t}_u^k$ are the unique solutions of
\[P_{\mathrm{top}}(F, -t\log\|D_xF|_{E^u(x)}\|)=0 \quad\text{and}\quad  P_{\mathrm{top}}(F, -t\log m(D_xF|_{E^u(x)}))=0\]
respectively. Using the same arguments as in the proof of Theorem $6.2$ and Theorem $6.3$ in \cite{bch},
one can prove that the sequences $\{\underline{t}_u^k\}$ and $\{\overline{t}_u^k\}$ are monotone. Furthermore, set
\[\underline{t}_u:=\lim_{k\to\infty}\underline{t}_u^k   \quad\text{and}\quad  \overline{t}_u:= \lim_{k\to\infty}\overline{t}_u^k,\]
one can show that $\underline{t}_u$, $\overline{t}_u$ are the unique solutions of the following equations
\[P_{\mathrm{var}}(f, -t\{\log\|D_xf^n|_{E^u}\|\})=0, \quad P_{\mathrm{top}}(f, -t\{\log m(D_xf^n|_{E^u})\})=0\]
respectively.

Consequently, we have that
\begin{equation*}
\underline{t}_u \leq \dim_H(\Lambda_\varepsilon\cap W^u_\beta(f,x)) \leq \underline{\dim}_B(\Lambda_\varepsilon\cap W^u_\beta(f,x)) \leq \overline{\dim}_B(\Lambda_\varepsilon\cap W^u_\beta(f,x)) \leq \overline{t}_u
\end{equation*}
and
\begin{eqnarray*}
\begin{aligned}
\underline{t}_u &=\sup\Big\{\frac{h_\nu(f)}{\displaystyle{\lim_{n\to\infty}} \frac 1n \int \log\|D_xf^n|_{E^u}\|d\nu}: \nu\in\mathcal{M}_f(\Lambda_\varepsilon) \Big\},\\
\overline{t}_u  &=\sup\Big\{\frac{h_\nu(f)}{\displaystyle{\lim_{n\to\infty}} \frac 1n \int \log m(D_xf^n|_{E^u})d\nu}: \nu\in\mathcal M_f(\Lambda_\varepsilon) \Big\}.
\end{aligned}
\end{eqnarray*}
Combining with (i) and (ii), one has
\begin{eqnarray}\label{0star}
\frac{h_\mu(f)-\varepsilon}{\lambda_u(\mu)+\varepsilon} \le \mathrm{Dim}(\Lambda_\varepsilon \cap W^u_\beta(f,x))\le \frac{h_\mu(f)+\varepsilon}{\lambda_u(\mu)-\varepsilon}
\end{eqnarray}
for every $x\in \Lambda_\varepsilon$, where $\mathrm{Dim}$ denotes either $\dim_H$ or $\underline{\dim}_B$ or $\overline{\dim}_B$.  One can show in a similar fashion that
\begin{eqnarray}\label{1star}
\begin{aligned}
-\frac{h_\mu(f)-\varepsilon}{\lambda_s(\mu)-\varepsilon}\le \mathrm{Dim}(\Lambda_\varepsilon \cap W^s_\beta(f,x)) \le  -\frac{h_\mu(f)+\varepsilon}{\lambda_s(\mu)+\varepsilon}
\end{aligned}
\end{eqnarray}
for every $x\in \Lambda_\varepsilon$.

\begin{lemma}\label{holonomy map of w}
The holonomy maps of the stable and unstable foliations for $(f, \Lambda_\varepsilon)$ are Lipschitz continuous.
\end{lemma}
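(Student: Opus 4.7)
The plan is to invoke the Burns--Wilkinson Lipschitz holonomy criterion together with the uniform conformality of the hyperbolic splitting on $\Lambda_\varepsilon$ that is provided by Theorem \ref{horse-app}. Since $f$ is $C^{1+\alpha}$ and $\Lambda_\varepsilon$ is a uniformly hyperbolic set, the local stable and unstable manifolds are well defined and vary continuously with the base point, so the issue is entirely one of controlling the distortion of the holonomy along the leaves.

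Fix $x \in \Lambda_\varepsilon$, a small $\beta > 0$, and $y \in W^u_\beta(f,x)$; write $h$ for the unstable holonomy from $W^s_\beta(f,x)$ to $W^s_\beta(f,y)$. The classical strategy is to express the Jacobian of $h$ as a telescoping infinite product coming from the chain rule along the orbits of $z$ and $h(z)$, namely
\[
\log J h(z) \;=\; \sum_{n=0}^{\infty} \Bigl( \log \det\bigl(D_{f^n z} f|_{E^s}\bigr) - \log \det\bigl(D_{f^n h(z)} f|_{E^s}\bigr)\Bigr),
\]
and then to prove that this series converges absolutely and uniformly on $W^s_\beta(f,x)\cap\Lambda_\varepsilon$. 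Convergence is driven by two ingredients: the exponential decay of $\dist(f^n z, f^n h(z))$ along the stable direction, and the H\"{o}lder modulus of continuity of the map $z \mapsto \log\det(D_z f|_{E^s(z)})$, which is $\alpha$-H\"{o}lder thanks to the $C^{1+\alpha}$ regularity of $f$ together with the continuous dependence of $E^s$ on $z$.

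Next, I would plug in the conformal bounds from Theorem \ref{horse-app}(5). They guarantee that every unit vector in $E^s$ expands in forward time at rate essentially $e^{\lambda_s(\mu)+\varepsilon}$, so the stable distance between $f^n z$ and $f^n h(z)$ decays like $e^{n(\lambda_s(\mu)+\varepsilon)}$. Combining this with the H\"{o}lder estimate yields a bound of the form $C e^{\alpha n (\lambda_s(\mu)+\varepsilon)}$ for the $n$-th summand, which is geometrically small once $\varepsilon < \alpha|\lambda_s(\mu)|$. The series then converges uniformly to a bounded function, so $J h$ is bounded, and integrating along the leaf $W^s_\beta(f,x)$ produces the desired Lipschitz constant for $h$. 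The Lipschitz property of the stable holonomy follows from the same argument applied to $f^{-1}$, using the expansion rate in $E^u$ in place of the contraction rate in $E^s$.

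The main obstacle will be the bookkeeping in the higher-dimensional case, where $E^u$ or $E^s$ has dimension greater than one and the log-Jacobian is no longer a single scalar cocycle. Here one must exploit that the bounds in Theorem \ref{horse-app}(5) are conformal up to error $\varepsilon$, so that all singular values of $D f|_{E^s(x)}$ are comparable within a factor $e^{2\varepsilon}$; this reduces the log-determinant to a sum of scalar terms each of which admits the same H\"{o}lder control as above, and the argument proceeds as in the one-dimensional setting. If necessary one may first pass to a high iterate $f^N$ in order to absorb the multiplicative constants in Theorem \ref{horse-app}(5) into exponentials, thereby making the bunching inequality underlying the Burns--Wilkinson criterion transparent.
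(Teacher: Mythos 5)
Your instinct to invoke the Burns--Wilkinson criterion and to exploit the near-conformality of the splitting on $\Lambda_\varepsilon$ is exactly the paper's strategy, but the body of your proposal wanders away from a clean application of the theorem into a sketch that tries to re-derive the Lipschitz estimate via a telescoping log-Jacobian series, and that sketch has concrete problems. First, the direction of iteration is wrong: for the unstable holonomy $h$, the points $z$ and $h(z)$ lie on the same \emph{unstable} leaf, so $\dist(f^n z, f^n h(z))$ grows exponentially under forward iteration and the series as you wrote it diverges; one must iterate backward under $f^{-1}$, where unstable distances contract. Second, your convergence threshold $\varepsilon<\alpha|\lambda_s(\mu)|$ is not the right condition: a series with terms $Ce^{\alpha n(\lambda_s(\mu)+\varepsilon)}$ converges as soon as $\lambda_s(\mu)+\varepsilon<0$, independently of $\alpha$, and in any case bounding the log-Jacobian pointwise is not the same as producing the Lipschitz holonomy; that is precisely the content of the Burns--Wilkinson theorem, which you should apply rather than reprove. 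Third, and most importantly, you never write down the bunching inequality that needs to be checked. The paper's proof consists exactly of that verification: pass to $F=f^N$, set $a_F=\|DF^{-1}|_{E^u}\|$, $b_F=\|DF|_{E^s}\|$, $c_F=\|DF|_{E^u}\|$, $d_F=\|DF^{-1}|_{E^s}\|$, use the pinching $1<\|D_xF|_{E^i}\|/m(D_xF|_{E^i})<e^{2\varepsilon N}$ coming from Theorem~\ref{horse-app}(5) to show
\[
a_F b_F c_F \le e^{(\lambda_s(\mu)+3\varepsilon)N}<1
\qquad\text{and}\qquad
a_F b_F d_F \le e^{(-\lambda_u(\mu)+3\varepsilon)N}<1
\]
provided $\varepsilon$ is small enough that $\lambda_s(\mu)+3\varepsilon<0$ and $\lambda_u(\mu)-3\varepsilon>0$, and then cite \cite[Theorem 0.2]{bw2005} to conclude that the stable and unstable holonomies of $(F,\Lambda_\varepsilon)$ are $C^1$; since $W^i_\beta(F,x)\cap\Lambda_\varepsilon=W^i_\beta(f,x)\cap\Lambda_\varepsilon$, the holonomies for $(f,\Lambda_\varepsilon)$ are Lipschitz. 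You should drop the telescoping sketch entirely and replace it with this two-line verification.
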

\begin{proof}
Fix a positive integer $N$, put $F:=f^N$ and $\Lambda=\Lambda_\varepsilon$. Since $\Lambda$ is a locally maximal hyperbolic set for $f$, so is $\Lambda$ for $F$. Notice that
\begin{equation}\label{foliationFf}
W^u_\beta(F, x)\cap\Lambda = W^u_\beta(f, x)\cap\Lambda \quad \text{and} \quad W^s_\beta(F, x)\cap\Lambda = W^s_\beta(f, x)\cap\Lambda.
\end{equation}
Let
$$a_F=\|DF^{-1}|_{E^u}\|,\ b_F=\|DF|_{E^s}\|,c_F=\|DF|_{E^u}\|, \ d_F=\|DF^{-1}|_{E^s}\|.$$
It follows from (ii) that
\[
1<\frac{\|D_xF|_{E^i(x)}\|}{m(D_xF|_{E^i(x)})}<e^{2\varepsilon N}, \text{ for every } x\in \Lambda~~\text{and}~~i\in\{s,u\}.
\]
Hence,
\[
a_F b_F c_F=\frac{\|DF|_{E^s}\|\cdot \|DF|_{E^u}\|}{m(DF|_{E^u})}<e^{(\lambda_s(\mu)+3\varepsilon)N}<1
\]
provided that $\varepsilon>0$ is sufficiently small such that $\lambda_s(\mu)+3\varepsilon<0$. By Theorem $0.2$ in \cite{bw2005}, we have that the holonomy map of the stable foliation for $(F, \Lambda)$ is $C^1$. Similarly, note that
\[
a_Fb_Fd_F=\frac{\|DF|_{E^s}\|}{m(DF||_{E^s})m(DF||_{E^u})}<e^{(-\lambda_u(\mu)+3\varepsilon)N}<1
\]
provided that $\varepsilon>0$ is sufficiently small such that $\lambda_u(\mu)-3\varepsilon>0$. It follows from \cite[Theorem 0.2]{bw2005} that the holonomy map of the unstable foliation for $(F, \Lambda)$ is $C^1$. Combing (\ref{foliationFf}) one has the holonomy maps of the stable and unstable foliations for $(f, \Lambda)$ are Lipschitz continuous.
\end{proof}

By Lemma \ref{holonomy map of w} and the fact $f$ is topologically mixing on $\Lambda_\varepsilon$, one has $\dim_H(\Lambda_\varepsilon\cap W^u_\beta(f, x))$, $\underline{\dim}_B(\Lambda_\varepsilon\cap W^u_\beta(f, x))$ and $\overline{\dim}_B(\Lambda_\varepsilon\cap W^u_\beta(f,x))$ are independent of $\beta$ and $x$ (see the proof of Theorem $4.3.2$ in \cite{ba2008} for more details).
Let
\[A_{\varepsilon, x}=(\Lambda_\varepsilon \cap W^u_\beta(f,x)) \times (\Lambda_\varepsilon \cap W^s_\beta(f,x)).\]
 By the properties of dimension (e.g. see \cite{fal03,Pes97}), one has
\begin{eqnarray}\label{2star}
\begin{aligned}
     & \dim_H(\Lambda_\varepsilon\cap W^u_\beta(f, x)) + \dim_H(\Lambda_\varepsilon\cap W^s_\beta(f, x))\\
\leq\ & \dim_H A_{\varepsilon, x}\\
\leq\ & \underline{\dim}_B A_{\varepsilon, x}\\
\leq\ & \overline{\dim}_B A_{\varepsilon, x}\\
\leq\ & \overline{\dim}_B (\Lambda_\varepsilon\cap W^u_\beta(f, x)) + \overline{\dim}_B (\Lambda_\varepsilon\cap W^s_\beta(f, x)).
\end{aligned}
\end{eqnarray}
Let $\Phi: A_{\varepsilon,x} \to \Lambda_\varepsilon$ be given by
\[ \Phi(y,z)=W^s_\beta(f, y) \cap W^u_\beta(f, z).\]
It is easy to see $\Phi$ is a homeomorphism onto a neighborhood $V_x$ of $x$ in $\Lambda_\varepsilon$. It follows from Lemma \ref{holonomy map of w} that $\Phi$ and $\Phi^{-1}$ are Lipschitz continuous (see  Theorem $4.3.2$ in \cite{ba2008} for detailed proofs). It follows from Corollary \ref{lipdimension} that
\[ \mathrm{ Dim} V_x = \mathrm{ Dim} A_{\varepsilon, x}, \]
where $\mathrm{ Dim}$ denotes either $\dim_H$ or $\underline{\dim}_B$ or $\overline{\dim}_B$.
Since $\{V_x: x\in \Lambda_\varepsilon\}$ is an open cover of $\Lambda_\varepsilon$, one can choose a finite open cover $\{V_{x_1}, V_{x_2}, \cdots, V_{x_k}\}$ of $\Lambda_\varepsilon$. It follows from (\ref{2star}) that
\begin{eqnarray*}
\begin{aligned}
     & \dim_H(\Lambda_\varepsilon \cap W^u_\beta(f, x)) + \dim_H(\Lambda_\varepsilon \cap W^s_\beta(f, x))\\
\leq\ & \dim_H \Lambda_\varepsilon = \max_{1\leq i \leq k} \dim_H V_{x_i}\\
\leq\ & \overline{\dim}_B \Lambda_\varepsilon =\max_{1\leq i\leq k}\overline{\dim}_B V_{x_i}\\
\leq\ & \overline{\dim}_B(\Lambda_\varepsilon \cap W^u_\beta(f, x)) + \overline{\dim}_B(\Lambda_\varepsilon \cap W^s_\beta(f, x)),
\end{aligned}
\end{eqnarray*}
for every $x\in\Lambda_\varepsilon$.
Combining \eqref{0star} and \eqref{1star}  we obtain
\[\lim_{\varepsilon\to0} \mathrm{ Dim} \Lambda_\varepsilon = \frac{h_\mu(f)}{\lambda_u(\mu)} - \frac{h_\mu(f)}{\lambda_s(\mu)} = \mathrm{ Dim} \mu, \]
where $\mathrm{ Dim}$ denotes either $\dim_H$ or $\underline{\dim}_B$ or $\overline{\dim}_B$.
This completes the proof of Theorem \ref{B}.


\subsection{Proof of Theorem \ref{C}}
For every pair $(f,\mu)$ satisfying the assumptions,
Wang and Cao \cite[Corollary 1]{wc2016} proved that
\[ \dim_H\mu = \frac{h_\mu(f)}{\lambda_u(\mu)} - \frac{h_\mu(f)}{\lambda_s(\mu)}.\]
Fix a small number $\varepsilon>0$. Wang, Cao and Zou \cite[Theorem 1.1]{wcz} proved that there exists a horseshoe $\Lambda_\varepsilon$ such that
\begin{enumerate}
\item[(i)]$|h_{\mathrm{top}}(f,\Lambda_\varepsilon) - h_{\mu}(f)|<\varepsilon$;
\item[(ii)] there exists a dominated splitting $T_{\Lambda_\varepsilon} M=E^u\oplus E^s$ with $\dim E^i=d_i\, (i=u,s)$, and for each $x\in \Lambda_\varepsilon$, every $n\geq 1$ and each vector $v\in E^i(x)$ $(i=s, u)$,
     \[
     e^{(\lambda_i(\mu)-\varepsilon)n}< \|D_xf^{n}(v)\|< e^{(\lambda_i(\mu)+\varepsilon)n}.
     \]
\end{enumerate}
Fix a positive integer $k\in\mathbb{N}$, denote $F=f^{2^k}$. Since $\Lambda_\varepsilon$ is a locally maximal hyperbolic set for $f$, so is $\Lambda_\varepsilon$ for $F$. Notice that
\[ W^u_\beta(F, x)\cap\Lambda_\varepsilon = W^u_\beta(f, x)\cap\Lambda_\varepsilon \quad \text{and} \quad W^s_\beta(F, x)\cap\Lambda_\varepsilon = W^s_\beta(f, x)\cap\Lambda_\varepsilon.\]
For every $x\in\Lambda_\varepsilon$, it follows from \cite[Lemmas 3.5 and 3.6]{wwcz} that
\begin{eqnarray*}
\underline{t}_u^k \leq \dim_H(\Lambda_\varepsilon\cap W^u_\beta(f,x))
                  \leq \overline{\dim}_B(\Lambda_\varepsilon\cap W^u_\beta(f,x))
                  \leq \overline{t}_u^k,
\end{eqnarray*}
where $\underline{t}_u^k$, $\overline{t}_u^k$ are the unique roots of
\[P_{\mathrm{top}}(F, -t\log\|D_xF|_{E^u(x)}\|)=0, \quad P_{\mathrm{top}}(F, -t\log m(D_xF|_{E^u(x)}))=0\]
respectively. Using the same arguments as in the proof of Theorem $6.2$ and Theorem $6.3$ in \cite{bch},
one can prove that the sequences $\{\underline{t}_u^k\}$ and $\{\overline{t}_u^k\}$ are monotone. Set
\[\underline{t}_u:=\lim_{k\to\infty}\underline{t}_u^k \quad\text{and}\quad  \overline{t}_u:= \lim_{k\to\infty}\overline{t}_u^k,\]
where $\underline{t}_u$, $\overline{t}_u$ are the unique solutions of the following equations
\[P_{\mathrm{var}}(f, -t\{\log\|D_xf^n|_{E^u(x)}\|\})=0, \quad P_{\mathrm{top}}(f, -t\{\log m(D_xf^n|_{E^u(x)})\})=0\]
respectively. Hence, we have that
\begin{equation*}
\underline{t}_u \leq \dim_H(\Lambda_\varepsilon\cap W^u_\beta(f,x)) \leq \underline{\dim}_B(\Lambda_\varepsilon\cap W^u_\beta(f,x)) \leq \overline{\dim}_B(\Lambda_\varepsilon\cap W^u_\beta(f,x)) \leq \overline{t}_u.
\end{equation*}
Since
\begin{eqnarray*}
\underline{t}_u &=\sup\Big\{\frac{h_\nu(f)}{ \displaystyle{\lim_{n\to\infty}}\frac 1n \int \log\|D_xf^n|_{E^u(x)}\|d\nu}: \nu\in\mathcal{M}_f(\Lambda_\varepsilon) \Big\}
\end{eqnarray*}
and
\begin{eqnarray*}
\overline{t}_u  &=\sup\Big\{\frac{h_\nu(f)}{\displaystyle{\lim_{n\to\infty}} \frac 1n \int \log m(D_xf^n|_{E^u(x)})d\nu}: \nu\in\mathcal{M}_f(\Lambda_\varepsilon) \Big\}
\end{eqnarray*}
using (i) and (ii) one can show that
\begin{eqnarray}\label{4star}
\frac{h_\mu(f)-\varepsilon}{\lambda_u(\mu)+\varepsilon}\le \mathrm{Dim}(\Lambda_\varepsilon \cap W^u_\beta(f,x))\le \frac{h_\mu(f)+\varepsilon}{\lambda_u(\mu)-\varepsilon}
\end{eqnarray}
for every $x\in \Lambda_\varepsilon$, where $\mathrm{Dim}$ denotes either $\dim_H$ or $\underline{\dim}_B$ or $\overline{\dim}_B$. Similarly, we obtain that
\begin{eqnarray}\label{5star}
-\frac{h_\mu(f)-\varepsilon}{\lambda_s(\mu)-\varepsilon}\le \mathrm{Dim}(\Lambda_\varepsilon \cap W^s_\beta(f,x)) \le  -\frac{h_\mu(f)+\varepsilon}{\lambda_s(\mu)+\varepsilon}
\end{eqnarray}
for every $x\in \Lambda_\varepsilon$.

\begin{lemma}\label{holderfoliation}
Let $\Lambda$ be a locally maximal hyperbolic set of a $C^1$ diffeomorphism such that $f$ is topologically mixing on $\Lambda$. Assume that the diffeomorphism $f|_\Lambda$ possesses a $\{\lambda_u(\mu), \lambda_s(\mu)\}$-dominated splitting $T_\Lambda M=E^u\oplus E^s$ with $E^u\succeq E^s$ and $\lambda_u(\mu)>0>\lambda_s(\mu)$. Then for every $\gamma\in(0, 1)$ there exists $D_\gamma>0$ such that the holonomy maps of the stable and unstable foliations for $f$ are $(D_\gamma, \gamma)$-H$\ddot{o}$lder continuous.
\end{lemma}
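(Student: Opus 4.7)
The plan is to adapt the Palis--Viana argument \cite{PV88} to the $C^1$ dominated setting. I treat the unstable holonomy $h:W^s_\beta(f,x)\cap\Lambda\to W^s_\beta(f,y)\cap\Lambda$ for close $x,y\in\Lambda$; the stable case is analogous. Since $W^s_\beta(f,x)\cap\Lambda=W^s_\beta(F,x)\cap\Lambda$ and similarly for $W^u$, I may replace $f$ by $F:=f^N$ for any $N$ I like. Fix $\gamma\in(0,1)$. The $\{\lambda_u(\mu),\lambda_s(\mu)\}$-dominated splitting provides, uniformly on $\Lambda$, estimates $\|DF|_{E^s}\|\le e^{N(\lambda_s(\mu)+\varepsilon)}$ and $m(DF|_{E^u})\ge e^{N(\lambda_u(\mu)-\varepsilon)}$. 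Choosing $\varepsilon$ small and $N$ large, the bunching condition
\[
\|DF|_{E^s}\|^{\gamma}\,m(DF|_{E^u})^{-1}<1
\]
is satisfied, since $\lambda_s(\mu)<0<\lambda_u(\mu)$ makes the exponent $N\bigl(\gamma(\lambda_s(\mu)+\varepsilon)-(\lambda_u(\mu)-\varepsilon)\bigr)$ negative.

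Next, for $z_1,z_2\in W^s_\beta(F,x)\cap\Lambda$ with $\delta:=d(z_1,z_2)$ small, set $w_i:=h(z_i)\in W^s_\beta(F,y)\cap\Lambda$. The pairs $(z_i,w_i)$ are joined by short unstable segments of length at most $C\,d(x,y)$. Following the Palis--Viana telescoping scheme, I iterate forward: $d(F^n z_1,F^n z_2)\lesssim \|DF|_{E^s}\|^n\delta$ and $d(F^n w_1,F^n w_2)\lesssim \|DF|_{E^s}\|^n d(w_1,w_2)$, while the transverse unstable separation of $F^n z_i$ from $F^n w_i$ is controlled by $m(DF|_{E^u})^{-n}\cdot d(x,y)$ (as seen by pulling back under $F^{-n}$ along unstable leaves). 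Expressing the ratio $d(w_1,w_2)/\delta^{\gamma}$ as a telescoping product of the quantities $\|DF|_{E^s}(F^n z_1)\|/\|DF|_{E^s}(F^n z_2)\|$ corrected by the unstable transverse contribution, and applying the bunching inequality, one obtains a uniform bound $d(w_1,w_2)\le D_\gamma\,\delta^{\gamma}$, with $D_\gamma$ essentially the reciprocal of $1-\|DF|_{E^s}\|^{\gamma}m(DF|_{E^u})^{-1}$.

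The principal obstacle is distortion control in the $C^1$ regime: $\log\|DF|_{E^s}\|$ is only uniformly continuous on $\Lambda$, not H\"older, so the telescoping product acquires multiplicative errors governed by the modulus of continuity $\omega$ of this function. In the $C^{1+\alpha}$ case of Lemma \ref{holonomy map of w} one obtains Lipschitz holonomies via the Burns--Wilkinson criterion $a_Fb_Fc_F<1$, which crucially uses H\"older regularity of $DF$; here only uniform continuity is available, which is precisely why one must settle for an exponent $\gamma<1$. The accumulated distortion $\sum_{n\ge0}\omega\bigl(d(F^n z_1,F^n z_2)\bigr)$ is nevertheless finite because its arguments decay exponentially on stable leaves, and the bunching slack associated with $\gamma<1$ is exactly what absorbs this finite but a priori large factor. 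Finally, topological mixing of $f$ on $\Lambda$ (together with continuity of the local product structure) makes the constant $D_\gamma$ uniform in $x,y$, completing the proof.
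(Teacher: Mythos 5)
Your proposal takes a genuinely different route from the paper. The paper's proof is soft: it chooses a $C^2$ diffeomorphism $G$ that is $C^1$-close to $F=f^N$ and still satisfies the relevant bunching inequalities; by Hirsch--Pugh \cite{HP70} (see also \cite{psw97}) the stable and unstable foliations of $G$ over $\Lambda_G$ are $C^1$, so the holonomies $\pi^s_G,\pi^u_G$ are Lipschitz. Structural stability gives a conjugacy $h_G:\Lambda\to\Lambda_G$, and since $G$ may be taken arbitrarily $C^1$-close to $F$, the restriction of $h_G$ to local stable and unstable leaves is $(C_\gamma,\gamma)$-H\"older for any prescribed $\gamma<1$. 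The holonomy for $F$ is then $\pi^s=h_G^{-1}\circ\pi^s_G\circ h_G$, a composition of a $\gamma$-H\"older map, a Lipschitz map, and a $\gamma$-H\"older map, hence $(D_\gamma,\gamma)$-H\"older. All the $C^1$ distortion issues are thereby pushed into the conjugacy $h_G$, whose H\"older exponent is controlled by the $C^1$-distance from $G$ to $F$ rather than by any hard distortion estimate on $F$ itself.

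Your telescoping argument instead works directly with $F$ and must face the distortion head-on, and this is where there is a real gap. You assert that $\sum_{n\ge 0}\omega\bigl(d(F^nz_1,F^nz_2)\bigr)$ is finite because the distances decay exponentially; that is false for a general modulus of uniform continuity. Take $\omega(t)=1/\log(e/t)$: then $\sum_n\omega(e^{-cn}\delta)$ diverges. Exponential decay of the argument forces convergence only under a Dini condition on $\omega$, which $C^1$ regularity does not supply --- this is exactly the classical obstruction to bounded distortion in the $C^1$ category, and it is why the paper avoids the direct estimate altogether. The argument could perhaps be salvaged by proving the weaker statement that the relevant partial sum up to the cut-off iterate $n_0(\delta)$ is $o(\log(1/\delta))$, so the distortion factor is $\delta^{-o(1)}$ and is absorbed by the $\gamma<1$ slack; but that step is not in your write-up and is where the actual work lies. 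There is also a sign slip: forward iteration \emph{expands} the transverse unstable separation, so it grows like $m(DF|_{E^u})^{n}$, not $m(DF|_{E^u})^{-n}$.
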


\begin{proof}
For every $x\in\Lambda$, $n\in\mathbb{N}$ and each unit vector $v\in E^i(x)$ ($i=u, s$),
\begin{equation*}
e^{n(\lambda_i(\mu)-\varepsilon)} \leq \|D_xf^{n}(v)\| \leq  e^{n(\lambda_i(\mu)+\varepsilon)}.
\end{equation*}
Fix a positive integer $N$, put $F:=f^N$.
This implies that for every $x\in\Lambda$,
\begin{equation*}
1 \leq \frac{\|D_xF|_{E^u}\|}{m(D_xF|_{E^u})} \leq e^{2N\varepsilon},\quad 1 \leq \frac{\|D_xF|_{E^s}\|}{m(D_xF|_{E^s})} \leq e^{2N\varepsilon}.
\end{equation*}
Notice that $\Lambda$ is also a locally maximal hyperbolic set for $F$ and
\[ W^u_\beta(F, x)\cap\Lambda = W^u_\beta(f, x)\cap\Lambda \quad \text{and} \quad W^s_\beta(F, x)\cap\Lambda = W^s_\beta(f, x)\cap\Lambda\]
for every $x\in\Lambda$.

Let $\pi^s$ and $\pi^u$ be the holonomy maps of stable and unstable foliations for $f$, i.e. for any $x\in\Lambda$, $x'\in W^s_\beta(f, x)$ and $x''\in W^u_\beta(f, x)$ close to $x$,
$$\pi^s:\ W^u_\beta(f,x)\cap\Lambda\to W^u_\beta(f,x')\cap\Lambda\ \text{with}\ \pi^s(y)=W^s_\beta(f,y)\cap W^u_\beta(f,x')$$
and
$$\pi^u:\ W^s_\beta(f,x)\cap\Lambda\to W^s_\beta(f,x'')\cap\Lambda\ \text{with}\ \pi^u(z)=W^u_\beta(f,z)\cap W^s_\beta(f,x'').$$
Therefore, $\pi^s$ is also a map from $W^u_\beta(F,x)\cap\Lambda$ to $W^u_\beta(F, x')\cap\Lambda$ and $\pi^u$ is also a map from $W^s_\beta(F,x)\cap\Lambda$ to $W^s_\beta(F, x'')\cap\Lambda$.

Let $U\subset M$ be an open subset such that $\Lambda=\bigcap_{n\in\mathbb{Z}}f^n(U)$,  and $\mathcal{U}\subset \mbox{Diff}^{1}(M)$ be a neighbourhood of $f$ such that, for each $g \in \mathcal{U}$, $\Lambda_g = \bigcap_{n\in\mathbb{Z}} g^n(U)$ is a locally maximal hyperbolic set for $g$ and there is a homeomorphism $h_g: \Lambda \rightarrow \Lambda_g$ satisfies that $g\circ h_g=h_g\circ f$, with $h_g$ $C^0-$close to identity if $g$ is $C^1-$close to $f$.
For $g \in \mathcal{U}$, let $T_{\Lambda_g}M=E^u_g\oplus E^s_g$ denote the hyperbolic splitting over $\Lambda_g$. For $i\in\{u,s\}$, $\{W^i_\beta(g,z):\ z\in\Lambda_g\}$ is continuous on $g$ in the following sense: there is $\{\theta_{g,x}^i:\ x\in\Lambda\}$ where $\theta_{g,x}^i:\ W^i_\beta(f,x)\to W^i_\beta(g,h_g(x))$ is a $C^1$ diffeomorphism with $\theta_{g,x}^i(x)=h_g(x)$, such that if $g$ is $C^1-$close to $f$ then, for all $x\in\Lambda$, $\theta_{g,x}^i$ is uniformly $C^1-$close to the inclusion of $W^i_\beta(f,x)$ in $M$.

For any $\gamma\in(0, 1)$, let $\mathcal{U}_\gamma^F$ be a small $C^1$ neighborhood of $F$ (recall $F=f^N$). Taking $G\in \mathcal{U}_\gamma^F \cap \mathrm{Diff}^2(M)$ such that for every $x\in \Lambda_G$ (here $\Lambda_G$ is a locally maximal hyperbolic set for $G$), $n\in\mathbb{N}$ and $i=u, s$,
\begin{equation}\label{3star}
e^{nN(\lambda_i(\mu)-2\varepsilon)} \leq \|D_xG^{n}|_{E^i(x)}\| \leq  e^{nN(\lambda_i(\mu)+2\varepsilon)}.
\end{equation}

\begin{claim}\label{holderproperty}The following properties hold:
\begin{itemize}
  \item[(a)] $h_G|_{W^u_\beta(F, x)\cap\Lambda}$ and $\big(h_G|_{W^u_\beta(F, x)\cap\Lambda}\big)^{-1}$ are $(C_\gamma, \gamma)$-H\"{o}lder continuous for some $C_\gamma>0$.
  \item[(b)] the stable and unstable foliations
  \[\{W^s(G, z): z\in\Lambda_G\}, \quad \{W^u(G, z): z\in\Lambda_G\}\]
  are  $C^1$  and invariant for $G$. Thus the holonomy maps
      \begin{eqnarray*}
      \begin{aligned}
      \pi^s_G:\ &W^u_\beta(G,h_G(x))\cap\Lambda_G\to W^u_\beta(G,h_G(x'))\cap\Lambda_G\ \text{with}\\
                &\pi^s_G(y)=W^s_\beta(G,y)\cap W^u_\beta(G,h_G(x')),
      \end{aligned}
      \end{eqnarray*}
      and
      \begin{eqnarray*}
      \begin{aligned}
      \pi^u_G:\ &W^s_\beta(G,h_G(x))\cap\Lambda_G\to W^s_\beta(G,h_G(x''))\cap\Lambda_G\ \text{with}\\
                &\pi^u_G(z)=W^u_\beta(G,z)\cap W^s_\beta(G,h_G(x''))
      \end{aligned}
      \end{eqnarray*}
      are Lipschitz continuous.
\end{itemize}
\end{claim}

\begin{proof}
(a) See Claim $3.1$ in \cite{wwcz}.

(b) Since $G$ satisfies (\ref{3star}), we conclude
\begin{eqnarray*}
\begin{aligned}
\frac{\|DG|_{E^u}\| \cdot \|DG|_{E^s}\|}{m(DG|_{E^u})} & \leq e^{4N\varepsilon} e^{N(\lambda_s(\mu)+2\varepsilon)}\\
                                                       & = e^{N(\lambda_s(\mu)+6\varepsilon)}\\
                                                       & \leq 1,
\end{aligned}
\end{eqnarray*}
provide that $\lambda_s(\mu)+6\varepsilon <0$. By Theorem $6.3$ in \cite{HP70} we have the stable foliation is $C^1$. Similarly we obtain the unstable foliation is also $C^1$. Then the corresponding maps are uniformly $C^1$ (see \cite[pages 540--541]{psw97} for more details), this implies the desired result.

\end{proof}

We proceed to prove Lemma \ref{holderfoliation}. For any $y\in W^u_\beta(F, x)\cap\Lambda$,
\begin{eqnarray*}
\begin{aligned}
h_G(\pi^s(y))&=h_G\big( W^s_\beta(F,y)\cap W^u_\beta(F, x') \big)\\
             &=W^s_\beta(G, h_G(y)) \cap W^u_\beta(G, h_G(x'))\\
             &=\pi^s_G(h_G(y)).
\end{aligned}
\end{eqnarray*}
For the above $\gamma$, by Claim \ref{holderproperty}  there exists $D_\gamma>0$ such that
\[\pi^s=h_G^{-1} \circ \pi^s_G \circ h_G \]
is $(D_\gamma, \gamma)$-H\"{o}lder continuous. Using the same arguments one can prove $(\pi^s)^{-1}$, $\pi^u$ and $(\pi^u)^{-1}$ are also $(D_\gamma, \gamma)$-H\"{o}lder continuous.

\end{proof}

By Lemma \ref{holderfoliation} and the fact $f$ is topologically mixing on $\Lambda_\varepsilon$, one has $\dim_H(\Lambda_\varepsilon\cap W^u_\beta(f, x))$, $\underline{\dim}_B(\Lambda_\varepsilon\cap W^u_\beta(f, x))$ and $\overline{\dim}_B(\Lambda_\varepsilon\cap W^u_\beta(f,x))$ are independent of $\beta$ and $x$ (see the proof of Lemma $3.4$ in  \cite{wwcz} for more details).
Let
\[A_{\varepsilon, x}=(\Lambda_\varepsilon \cap W^u_\beta(f,x)) \times (\Lambda_\varepsilon \cap W^s_\beta(f,x))\]
be a product space. By the properties of dimension (see Theorem $6.5$ in \cite{Pes97} for details), one has
\begin{eqnarray}\label{6star}
\begin{aligned}
     & \dim_H(\Lambda_\varepsilon\cap W^u_\beta(f, x)) + \dim_H(\Lambda_\varepsilon\cap W^s_\beta(f, x))\\
\leq\ & \dim_H A_{\varepsilon, x}\\
\leq\ & \underline{\dim}_B A_{\varepsilon, x}\\
\leq\ & \overline{\dim}_B A_{\varepsilon, x}\\
\leq\ & \overline{\dim}_B (\Lambda_\varepsilon\cap W^u_\beta(f, x)) + \overline{\dim}_B (\Lambda_\varepsilon\cap W^s_\beta(f, x)).
\end{aligned}
\end{eqnarray}
Let $\Phi: A_{\varepsilon,x} \to \Lambda_\varepsilon$ be given by
\[ \Phi(y,z)=W^s_\beta(f, y) \cap W^u_\beta(f, z).\]
It is easy to see that $\Phi$ is a homeomorphism onto a neighborhood $V_x$ of $x$ in $\Lambda_\varepsilon$. For any $\gamma\in(0, 1)$, by Lemma \ref{holderfoliation} there is $E_\gamma>0$ such that $\Phi$ and $\Phi^{-1}$ are $(E_\gamma, \gamma)$-H\"{o}lder continuous (see Step $2$ in the proof of Theorem A in \cite{wwcz} for more details). By Lemma \ref{holderdimension} and the arbitrariness of $\gamma$, one has
\[ \mathrm{ Dim} V_x = \mathrm{ Dim} A_{\varepsilon, x}, \]
where $\mathrm{ Dim}$ denotes either $\dim_H$ or $\underline{\dim}_B$ or $\overline{\dim}_B$.
Since $\{V_x: x\in \Lambda_\varepsilon\}$ is an open cover of $\Lambda_\varepsilon$, one can choose a finite open cover $\{V_{x_1}, V_{x_2}, \cdots, V_{x_k}\}$ of $\Lambda_\varepsilon$. It follows from (\ref{6star}) that
\begin{eqnarray*}
\begin{aligned}
     & \dim_H(\Lambda_\varepsilon \cap W^u_\beta(f, x)) + \dim_H(\Lambda_\varepsilon \cap W^s_\beta(f, x))\\
\leq\ & \dim_H \Lambda_\varepsilon = \max_{1\leq i \leq k} \dim_H V_{x_i}\\
\leq\ & \overline{\dim}_B \Lambda_\varepsilon =\max_{1\leq i\leq k}\overline{\dim}_B V_{x_i}\\
\leq\ & \overline{\dim}_B(\Lambda_\varepsilon \cap W^u_\beta(f, x)) + \overline{\dim}_B(\Lambda_\varepsilon \cap W^s_\beta(f, x)),
\end{aligned}
\end{eqnarray*}
for every $x\in\Lambda_\varepsilon$.
This together with (\ref{4star}) and (\ref{5star}) yield that
\[\lim_{\varepsilon\to0} \mathrm{ Dim} \Lambda_\varepsilon = \frac{h_\mu(f)}{\lambda_u(\mu)} - \frac{h_\mu(f)}{\lambda_s(\mu)} = \mathrm{ Dim} \mu, \]
where $\mathrm{ Dim}$ denotes either $\dim_H$ or $\underline{\dim}_B$ or $\overline{\dim}_B$.
This completes the proof of Theorem \ref{C}.






\subsection*{Acknowledgments}
This work is partially supported by The National Key Research and Development Program of China (2022YFA1005802).
Y. Cao is partially supported by NSFC (11790274).
J. Wang is partially supported by NSFC (11501400, 12271386) and the Talent Program of Shanghai University of Engineering Science. Y. Zhao  is partially supported by NSFC (12271386) and Qinglan project of Jiangsu Province.



\begin{thebibliography}{2}
\bibitem{AY}
J. C. Alexander, J. A. Yorke,  Fat Baker's transformations, {\it Ergod. Th. \& Dynam. Sys.}, 4: 1--23, 1984.


\bibitem{acw}
A. Avila, S. Crovisier and A. Wilkinson, $C^1$ density of stable ergodicity, {\it Adv. Math.}, 379, Paper No. 107496, 68 pp, 2021.

\bibitem{bch}
J. Ban, Y. Cao and H. Hu, The dimensions of a non-conformal repeller and an average conformal repeller, {\it Trans. Amer. Math. Soc.}, 362: 727--751,  2010.

\bibitem{ba96}
L. Barreira, A non-additive thermodynamic formalism and applications to dimension theory of hyperbolic dynamical systems, {\em Ergod. Th. \& Dynam. Sys.}, 16: 871--928, 1996.


\bibitem{ba2008}
L. Barreira, \emph{Dimension and recurrence in hyperbolic dynamics}, Progress in Mathematics, 272, Birkh$\ddot{a}$user Verlag, Basel, 2008.


\bibitem{bp13}
L. Barreira, Y. Pesin, \emph{Introduction to smooth ergodic theory}, GSM, \textbf{148}, AMS, Providence, Rhode Island, 2013.

\bibitem{bps}
L. Barreira, Y. Pesin and J. Schmeling, Dimension and product structure of hyperbolic measures, { \it Ann. of Math.}, 149: no.3, 755--783, 1999.


\bibitem{bw2005}
K. Burns, A. Wilkinson, A note on stable holonomy between centers. (preliminary version.), 2005, Preprint. http://www.math.uchicago.edu/~wilkinso/papers/c1hol0611.pdf


\bibitem{ka80}
A. Katok, Lyapunov exponents, entropy and periodic orbits for diffeomorphisms, \emph{Inst.
Hautes \'{E}tudes Sci. Publ. Math.}, 51: 137--173, 1980.

\bibitem{km95}
A. Katok, B. Hasselblatt, \emph{Introduction to the Modern Theory of Dynamical
Systems}, Encyclopedia of Mathematics and Its Applications, 54, Cambridge University Press (1995).

\bibitem{cfh}
Y. Cao, D. Feng and W. Huang, The thermodynamic formalism for sub-additive potentials, {\em Discrete Contin. Dyn. Syst.}, 20: 639--657, 2008.

\bibitem{chz}
Y. Cao, H. Hu and Y. Zhao, Nonadditive measure-theoretic pressure and applications to
dimensions of an ergodic measure, \emph{Ergod. Th.  Dynam. Syst.}, 33: 831--850, 2013.

\bibitem{cpz}
Y. Cao, Y. Pesin and Y. Zhao, Dimension estimates for non-conformal repellers and continuity of sub-additive topological pressure, {\em Geometric and Functional Analysis}, 29: 1325--1368, 2019.

\bibitem{Chu99}
Y. Chung, Shadowing properties of non-invertible maps with hyperbolic measures, Tokyo J. Math., 22: 145--166,  1999.

\bibitem{fal03}
K. Falconer, \emph{Fractal geometry: mathematical foundations and applications}, New York: Wiley, 2003.

\bibitem{FK20}
D. Feng, K. Simon, Dimension estimates for $C^1$ iterated function systems and repellers. Part I, Preprint, 2020.

\bibitem{fh16}
D. Feng, W. Huang, Variational principle for weighted topological pressure, {\em J. Math. Pures Appl.}, 106: 411--452, 2016.

\bibitem{Gel10} K. Gelfert, Repellers for non-uniformly expanding maps with singular or critical
points, {\em Bull. Braz. Math. Soc.}, 41: 237-257,  2010.

\bibitem{Gel16} K. Gelfert, Horseshoes for diffeomorphisms preserving hyperbolic measures,
{\em Math. Z.}, 283: 685--701, 2016.

\bibitem{HP70}
W. Hirsch and C. Pugh, Stable manifolds and hyperbolic sets, {\em Proceedind of Symposia in Pure Mathematics}, 133--163, 1970.

\bibitem{KY}
J. L. Kaplan, J. A. Yorke, Chaotic behavior of multidimensional difference equations, Funtional
Differential Equations and Application of Fixed Points (Lecture Notes in Math. 730), Springer, 1979,
pp. 223--237.

\bibitem{L81}
F. Ledrappier, Some relations between dimension and Lyapunov exponents, {\it Commun. Math. Phys.},
81: 229--238, 1981.

\bibitem{ly1}
F. Ledrappier and L. S. Young, The metric entropy of diffeomorphisms. II. Relations between
entropy, exponents and dimension, {\it Ann. of Math.}, 122: no.3, 540--574, 1985.

\bibitem{LS13}
 S. Luzzatto, F. J. Sanchez-Salas, Uniform hyperbolic approximation of
measures with non-zeroLyapunov exponents, {\em Proc. Am. Math. Soc.}, 141: 3157--3169, 2013.

\bibitem{Men85} L. Mendoza, The entropy of $C^2$ surface diffeomorphisms in terms of Hausdorff
dimension and a Lyapunov exponent, {\em Ergod. Th. \& Dynam. Sys.}, 5: 273--283, 1985.

\bibitem{men1988}
L. Mendoza, Ergodic attractors for diffeomorphisms of surfaces, {\em J. Lond. Math. Soc.}, 37: 362--374, 1988.

\bibitem{Men89}
 L. Mendoza, Topological entropy of homoclinic closures, {\em Trans. Am. Math. Soc.},
311: 255--266,  1989.


\bibitem{MS19} I. Morris, P. Shmerkin, On equality of Hausdorff and affinity dimensions
via self-affine measures on positive subsystems, {\em Trans. Amer. Math. Soc.}, 371: 1547--1582,
2019.

\bibitem{ms80}
 M. Misiurewicz, W. Szlenk, Entropy of piecewise monotone mappings, {\em Studia Math.}, 67: 45--63, 1980.


\bibitem{ose}
V. I. Oseledec, A multiplicative ergodic theorem. Characteristic Lyapnov exponents of dynamical
systems, \emph{Trudy Moskov. Mat. Ob\v{s}\v{z}}, 19: 179--210, 1968.

\bibitem{PV88}
J. Palis, M. Viana, On the continuity of hausdorff dimension and limit capacity for horseshoes, in Dynamical Systems, Valparaiso 1986, Lecture Notes in Mathematics , Springer,
Berlin, 1331, (1988), 150--160.

\bibitem{PS08}
 T. Persson, J. Schmeling, Dyadic diophantine approximation and Katok's
horseshoe approximation, {\em Acta Arith.}, 132: 205--230, 2008.

\bibitem{Pes97}
Y. Pesin, \emph{Dimension theory in dynamical systems}, Contemporary Views and Applications, University of Chicago Press, Chicago, 1997.

\bibitem{PU10}
 F. Przytycki, M. Urba\'{n}ski, \emph{Conformal Fractals: Ergodic Theory Methods},
London Mathematical Society Lecture Note Series 371, Cambridge University
Press, 2010.

\bibitem{psw97}
C. Pugh, M. Shub and A. Wilkinson, H\"{o}lder Foliations, { \em Duke Mathematical Journal}, 86(3): 517--546, 1997.

\bibitem{San02} F.J. S\'{a}nchez-Salas, Ergodic attractors as limits of hyperbolic horseshoes, {\em Ergod. Th. \& Dynam. Sys.}, 22: 571--589,  2002.

\bibitem{San03} F.J. S\'{a}nchez-Salas, Dimension of Markov towers for non-uniformly expanding
one-dimensional systems, {\em  Discrete Contin. Dyn. Syst.}, 9: 1447--1464,  2003.

\bibitem{San17}
F.J. S\'{a}nchez-Salas, On the approximation of dynamical indicators in systems with nonuniformly hyperbolic behavior, \emph{Monatsh. Math.}, 182: 463--487, 2017.

\bibitem{wal82}
P.~Walters, \emph{An introduction to ergodic theory}, Springer-Verlag, New York, 1982.

\bibitem{wc2016}
J. Wang, Y. Cao, The Hausdorff dimension estimation for an ergodic hyperbolic measure of $C^1$-diffeomorphism, {\em  Proc. Amer. Math. Soc. }, 144: 119--128, 2016.

\bibitem{wwcz}
J. Wang, J. Wang, Y. Cao and Y. Zhao, Dimensions of $C^1$-average conformal hyperbolic sets, {\em Discrete and Continuous Dynamical Systems}, 40(2): 883--905, 2020.

\bibitem{wcz}
J. Wang, Y. Cao and R. Zou, The approximation of uniform hyperbolicity for $C^1$-diffeomorphisms with hyperbolic measures, {\it Journal of Differential Equations}, 275: 359--390, 2021.

\bibitem{wqc}
J. Wang, C. Qu and Y. Cao, Dimension approximation for diffeomorphisms preserving hyperbolic SRB measures, \emph{Journal of Differential Equations},  337: 294--322, 2022.


\bibitem{Yan15} Y. Yang, Horseshoes for $C^{1+\alpha}$ mappings with hyperbolic measures, {\em  Disc. Contin.
Dynam. Sys.}, 35: 5133--5152,  2015.


\bibitem{young}
 L. S. Young,
Dimension, entropy and Lyapunov exponents, \emph{Ergod. Th. \& Dynam. Sys.},  2: 109--129, 1982.

\bibitem{zhao17}
Y. Zhao, Measure-theoretic pressure for amenable group actions, \emph{Colloquium Mathematicum}, 148(1): 87--106, 2017.



















\end{thebibliography}

\end{document}